\numberwithin{equation}{section}
\newtheorem{thm}{Theorem}[section]
\newtheorem{lem}[thm]{Lemma}
\newtheorem{cor}[thm]{Corollary}
\theoremstyle{remark}
\theoremstyle{definition}
\newtheorem{definition}{Definition}
\newtheorem*{Def}{Definition}
\def\bl{\begin{lemma}}
\def\el{\end{lemma}}
\def\bc{\begin{corollary}}
\def\ec{\end{corollary}}
\def\bt{\begin{theorem}}
\def\et{\end{theorem}}
\def\bp{\begin{proposition}}
\def\ep{\end{proposition}}
\def\be{\begin{equation}}
\def\ee{\end{equation}}
\def\bd{\begin{definition}}
\def\ed{\end{definition}}
\newcommand{\wt}{\widetilde}
\newcommand{\tr}{\mathbb{R}}
\newcommand{\rn}{\mathbb{R}^n}
\newcommand{\kn}{\mathcal{K}^n}
\newcommand{\sn}{S^{n-1}}
\newcommand{\tk}{\mathcal{K}}
\newcommand{\lh}{\left}
\newcommand{\rh}{\right}
 \newcommand{\vol}{\operatorname{vol}}
\title{The Reverse-log-Brunn-Minkowski inequality
 \footnote{Keywords: log-Brunn-Minkowski conjecture, reverse-to-forward principle.}}
\author{Dongmeng Xi}
\begin{document}

\date{}
\maketitle

\begin{abstract} Firstly, we propose our conjectured Reverse-log-Brunn-Minkowski inequality (RLBM).

	Secondly, we show that the (RLBM) conjecture is equivalent to the log-Brunn-Minkowski (LBM) conjecture proposed by B\"or\"oczky-Lutwak-Yang-Zhang. We name this as  ``reverse-to-forward" (RTF) principle.
		Using this principle, we give a very simple new proof of the log-Brunn-Minkowski inequality in dimension two.
	
	Finally, we establish the (RTF) principle for the conjectured log-Minkowski inequality (LM).
	Using this principle, we prove the
	log-Minkowski inequality in the case that one convex body is a zonoid (the inequality part was first proved by van Handel).
Via a study of the lemma of relations,
the full equality conditions (the bodies have ``dilated direct summands") are also characterized, which turns to be new.
\end{abstract}

\section{Introduction}

Let $n\ge 2$ be an integer.
In their seminal work \cite{BLYZ12,BLYZ13}, B\"or\"oczky-Lutwak-Yang-Zhang (B\"or\"oczky-LYZ) proposed the following
\vskip 4pt
\noindent{\bf Log-Brunn-Minkowski (LBM) conjecture}: For all origin-symmetric convex bodies  $K_0$ and $K_1$, it holds that
\be\label{lbmin} V(K_t) \ge V(  K_0)^{1-t}V(  K_1)^t, \quad \forall t\in (0,1),\ee
where 
\begin{equation}\label{path1}
 K_t = \{ x\in\rn: x\cdot v  \le h_{K_0}(v)^{1-t}h_{K_1}(v)^{t}, \forall v\in\sn  \}.     
\end{equation}
Equality is achieved if and only if
$K_0$ and $K_1$ have dilated direct summands. 
\vskip 4pt
They verified the conjecture in dimension 2. 
They also proved that (LBM) conjecture   is equivalent to the following
\vskip 4pt
\noindent{\bf Log-Minkowski (LM) conjecture.} For all origin-symmetric convex bodies  $K_0$ and $K_1$, it holds that
\be\label{lmi} \int_{\sn} \log \frac{h_{K_1}}{h_{K_0}} dV_{K_0} \ge \frac{V(K_0)}n \log\frac{V(K_1)}{V(K_0)},\ee
with equality if and only if
$K_0$ and $K_1$ have dilated direct summands. Here $V_K$ is called the {\it cone-volume measure}, defined for Borel subsets of $\sn$, as the volume of the convex hull of the origin and the pre-image of the Borel subset under the generalized Gauss map $\nu_K$ (where almost all boundary points have unique outer normal vectors),  
\[ V_K(\omega) = V({\rm conv}\{ o, \nu_K^{-1}(\omega)\}), \quad {\rm for~Borel~} \omega \subset \sn. \] 
\vskip 4pt
Their work \cite{BLYZ12,BLYZ13} also suggested that these conjectures are 
 equivalent to  
\vskip 4pt
\noindent{\bf Uniqueness conjecture of the logarithmic Minkowski problem.} If  $K_0$ and $K_1$ are origin-symmetric convex bodies such that
\[V_{K_0} = V_{K_1},\]
then $K_0$ and $K_1$ have dilated direct summands. 

\vskip 4pt
In recent years, a significant breakthrough was obtained by
Kolesnikov-Milman \cite{KM} (local version) and Chen-Huang-Li-Liu \cite{CHLL} (local-to-global),  in which  they proved the $L_p$-Brunn-Minkowski conjecture for $p$ slightly smaller than 1  and  
showed that the (LM) conjectured inequality is equivalent to a spectral-estimate of the so-called `Hilbert-Brunn-Minkowski operator'.  
For the case that the `base body' $K$ is a zonoid,
van Handel \cite{H2} proved the (LM) inequality, by  estimating the  spectral-gap of the Hilbert-Brunn-Minkowski operator $K$. But the conjectured full equality conditions seem to be open.

\vskip 4pt

\noindent{\bf A quick review of the equivalence of the   conjectures by B\"or\"oczky-LYZ.} The (LBM) conjecture says that the function 
$$f(t) = \log V(K_t)$$  
should be concave on $(0,1)$. If (LBM) is affirmative, then by   concavity, 
$$f'(0) \ge f(1) - f(0) \ge f'(1).$$
Meanwhile,  Aleksandrov's variation formula implies
\be \label{alek}
f'(0) =  \int_{\sn} \log \frac{h_{K_1}}{h_{K_0}} dV_{K_0}, \quad f'(1) =  \int_{\sn} \log \frac{h_{K_1}}{h_{K_0}} dV_{K_1}.\ee
Then,  (LM)  follows.  
The equality characterization follows from the strict concavity of $f$ (if $K_0$ and $K_1$ do not have dilated direct summands).  
For the uniqueness problem, if $V_{K_0}=V_{K_1}$, then  
$K_0$ and $K_1$ should have dilated direct summands  because $f'(0)=f'(1)$. 
Here, we omit the review of   the part concerning the `uniqueness conjecture implying (LM) conjecture and (LBM) conjecture'. 
 \vskip 4pt
If we view the `uniqueness conjecture' as the ultimate goal (actually equivalent to (LBM)), then the insight from the above review can be extracted as follows. Along with the `path' $K_t$ given by  \eqref{path1}, one can take differential to obtain \eqref{alek}. And since $K_1$ can be very arbitrary, \eqref{alek} actually describes all the information of the measure $V_{K_0}$. This makes us to consider the following nature question: 

\vskip 4pt

\noindent{\bf Question.} 
Is there the other  `path' $\wt K_t$, such that $t\mapsto V(\wt K_t)$ has a certain kind of convexity, and such that one can differentiate $V(\wt K_t)$ and the differential also describes all the information of the cone-volume measure $V_{K_0}$?

\vskip 4pt

The idea can be illustrated as follows. Studying the log-concavity of the volume functional along the 'old path' (the (LBM) conjecture) seems too difficult to approach directly. However, one might attempt to identify a 'new path' as suggested in the above question, along which it may be easier to investigate a certain convexity of the volume functional. This approach could then lead to achieving the ultimate goal-uniqueness conjecture.

The new path in the current paper is defined as follows. 

\begin{Def}
For  $K_0, K_1 \in \kn_e$,
define $\wt K_t$, for each $t\in (0,1)$, to be the unique origin-symmetric convex body that solves the following Minkowski problem
\be \label{d1} dS_{\wt K_t}(v) = \left[\frac{dS_{K_0}}{d(S_{K_0}+S_{K_1})}(v)\right]^{1-t}\left[\frac{dS_{K_1}}{d(S_{K_0}+S_{K_1})}(v)\right]^{t}d(S_{K_0}+S_{K_1})(v),\ee
where $ {dS_{K_i}}/{d(S_{K_0}+S_{K_1})}$, for $i=0,1$,
is the Radon-Nikodym differential. Here the Radon-Nikodym differential exists
because that $S_{K_i}$ is always absolutely continuous with respect to $d(S_{K_0}+S_{K_1})$. We also denote it by
\be\label{def-logblaschke} \wt K_t = (1-t)\cdot K_0 \#_0 t\cdot K_1, \notag\ee
and name it as {\it log-Blaschke combination}.  

The general expression in \eqref{d1} may seem a little complicated, but there are two other special cases that are useful and easy to understand. If  $S_{K_0}$ and $S_{K_1}$ are absolutely continuous w.r.t. each other, then 
\be \label{d2} dS_{\wt K_t} = \left(\frac{dS_{K_1}}{dS_{K_0}}\right)^t dS_{K_0}\ee
If both of $S_{K_0}$ and $S_{K_1}$ have density functions $\varphi_0$ and $\varphi_1$, then $\wt K_t$ is the convex body whose support function is the unique even solution to the following Monge-Amper\'e equation on $\sn$
\[ \det(\nabla^2h + hI) = \varphi_0^{1-t}   \varphi_1^{t}.\]

\end{Def}
In general, $\wt K_t$ will converge to a body contained in $K_0$ as $t\to 0$, and similarly as $t\to 1$. Under the assumption that $S_{K_0}$ and $S_{K_1}$ are absolutely continuous w.r.t. each other, we can prove (Lemma \ref{lem2.3})
\[\lim_{t\to0}\wt K_t = K_0 \quad {\rm and}\quad  \lim_{t\to1}\wt K_t = K_1.\]
The convergence is in Hausdorff metric.

Proposed here is,
\vskip 4pt

\noindent{\bf Reverse-log-Brunn-Minkowski (RLBM) conjecture.}  Let $K_0, K_1 \in\kn_e$. Then,
\be V(\wt K_t) \le V(  K_0)^{1-t}V(  K_1)^t, \quad \forall t\in (0,1).\ee
Equality is achieved if and only if
$K_0$ and $K_1$ have dilated direct summands.

\vskip 4pt

In the subsequent context, we will proceed with the actions outlined in the Abstract.
In Section 2, we show that the (RLBM) conjecture   is equivalent to the log-Brunn-Minkowski (LBM) conjecture. 
\footnote{The equivalence of the inequalities  was also noticed independently by Emanule Milman by using the ``local-to-global" principle.} 
Indeed, we prove in Theorem 2.6 that the (LBM)  inequality is equivalent to the (RLBM) inequality (without assuming equality characterization were proved), and is equivalent to a ``seemingly weaker" inequality 
\be\label{weakrlbm} V(\wt K_t) \le V(K_t). \ee
We prove in Corollary 2.7 that if one of the inequalities is verified, then the equality characterizations of (LBM) and (RLBM) are also equivalent, and is equivalent to the equality characterization of \eqref{weakrlbm} . We refer to these results as   ``reverse-to-forward" {\bf (RTF)} principle.

In Section 3, as an application of the (RTF) principle, we give a very simple new proof of the log-Brunn-Minkowski inequality in dimension two, by showing  $\wt K_t \subset K_t$. In higher dimension, \eqref{weakrlbm} also provides a new strategy: if one can construct a middle convex body $\bar K_t$ , such that   
\[ V(\bar K_t) \ge V(\wt K_t) \quad {\rm and} \quad \bar K_t\subset K_t, \]
then one can verify the (LBM) conjecture. 

In Section 4, we establish the (RTF) principle for the (LM) inequality.
Based on this principle, we prove the  (LM) inequality in the case that one convex body is a zonoid (the inequality part was first proved by van Handel). The conjectured full equality conditions are also characterized, which turns to be new.

\section{The equivalence of (LBM) and (RLBM)}

We will prove that   (RLBM) conjecture is equivalent to (LBM) conjecture in this section. Before this, we need some preparations.
	
The following is a variation formula with respect to a pertubation of the surface area measure. The idea of its proof is in spirit of Aleksandrov's approach to prove his well-known variation formula.

\begin{lem}[One-sided variation formula for the perturbation of surface area measure]\label{var}
	Let  $K\in \kn_e$. Let	$\varphi\in L^1(\sn;S_K)$ be an  even  function,  such that $\varphi$ is positive $S_K$-a.e. Define $\wt K_{\varphi,t}$ to be the unique origin-symmetric convex body satisfying
	\[dS_{\wt K_{\varphi,t}} = \varphi^t dS_K, \quad t\in [0,1].\]
	Then,
	\[ \frac{d}{dt}\Big|_{t=0^+} V(\wt K_{\varphi,t}) =  \frac1{n-1} \int_{\sn} \log \varphi h_{K}dS_K.\]
Note that the right hand side is bounded from above, but might be $-\infty$.
\end{lem}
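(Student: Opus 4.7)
The plan is to pinch the right derivative $\phi'_+(0)$ of $\phi(t):=V(\wt K_{\varphi,t})$ between two sides that both exploit Minkowski's first mixed-volume inequality, in the spirit of Aleksandrov's variational arguments. The convexity of $u\mapsto \varphi^u$ supplies the uniform two-sided bound $\log\varphi \le (\varphi^t-1)/t \le \varphi - 1$ for $t\in(0,1]$; in particular $\varphi^t\in L^1(\sn;S_K)$, so Minkowski's existence theorem produces the unique $\wt K_{\varphi,t}\in\kn_e$. Dominated convergence then gives $\varphi^t\,dS_K\to dS_K$ in total variation, upgraded by Minkowski's continuity theorem to $\wt K_{\varphi,t}\to K$ in Hausdorff metric, whence $\|h_{\wt K_{\varphi,t}}-h_K\|_\infty\to 0$ as $t\to 0^+$.

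For the upper bound, I apply Minkowski's inequality $V_1(A,B)^n\ge V(A)^{n-1}V(B)$ with $A=\wt K_{\varphi,t}$, $B=K$ and the identity $V_1(\wt K_{\varphi,t},K)=\tfrac{1}{n}\int h_K\varphi^t\,dS_K$. Setting $F(t):=\tfrac{1}{n}\int h_K\varphi^t\,dS_K$, this reads $V(\wt K_{\varphi,t})\le F(t)^{n/(n-1)}V(K)^{-1/(n-1)}$, with equality at $t=0$. Monotone convergence on the decreasing family $(\varphi^t-1)/t\downarrow \log\varphi$ yields $F'(0^+)=\tfrac{1}{n}\int h_K\log\varphi\,dS_K$, bounded above by $\tfrac{1}{n}\int h_K(\varphi-1)\,dS_K<\infty$. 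Comparing right derivatives of the two sides at $t=0$ produces $\phi'_+(0)\le\tfrac{1}{n-1}\int h_K\log\varphi\,dS_K$, which in particular completes the proof when the integral equals $-\infty$.

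For the matching lower bound (now under $A:=\int h_K\log\varphi\,dS_K>-\infty$), I use $nV(\wt K_{\varphi,t})=\int h_{\wt K_{\varphi,t}}\varphi^t\,dS_K$ to decompose
\[
n(V(\wt K_{\varphi,t})-V(K)) = \int h_{\wt K_{\varphi,t}}(\varphi^t-1)\,dS_K + \int (h_{\wt K_{\varphi,t}}-h_K)\,dS_K.
\]
The second integral equals $n(V_1(K,\wt K_{\varphi,t})-V(K))$, which via $V_1(K,\wt K_{\varphi,t})\ge V(K)^{(n-1)/n}V(\wt K_{\varphi,t})^{1/n}$ and the second-order Taylor expansion of $a\mapsto a^{1/n}$ about $V(K)$ satisfies $\int (h_{\wt K_{\varphi,t}}-h_K)\,dS_K\ge y - Cy^2$, where $y:=V(\wt K_{\varphi,t})-V(K)\to 0$. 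The first integral is $\ge tA + o(t)$: the uniform bound $\|h_{\wt K_{\varphi,t}}\|_\infty\le M$, the uniform convergence $h_{\wt K_{\varphi,t}}\to h_K$, and the dominating function $M(|\log\varphi|+|\varphi-1|)\in L^1(S_K)$ (integrable precisely because $A>-\infty$) let DCT evaluate $\int h_{\wt K_{\varphi,t}}(\varphi^t-1)/t\,dS_K\to A$. Assembling the pieces produces the quadratic inequality $(n-1)y+Cy^2\ge tA+o(t)$; since $y\to 0$ forces small $t$ onto the upper branch, we obtain $y\ge tA/(n-1)+o(t)$, i.e.\ $\liminf_{t\to 0^+}y/t\ge A/(n-1)$, matching the upper bound.

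The principal obstacle is that Minkowski's first inequality delivers only upper bounds on $V(\wt K_{\varphi,t})$; the lower bound therefore emerges only as a quadratic inequality in $y$, and Hausdorff continuity $V(\wt K_{\varphi,t})\to V(K)$ is needed to select the correct branch. A secondary subtlety is that in the case $A=-\infty$ the DCT step in the lower bound fails, but the upper bound alone already forces $\phi'_+(0)=-\infty$.
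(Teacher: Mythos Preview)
Your argument is correct and follows the same Aleksandrov-type pinching as the paper: the monotone family $(\varphi^t-1)/t\downarrow\log\varphi$, dominated convergence giving $S_{\wt K_{\varphi,t}}\to S_K$ weakly and hence $\wt K_{\varphi,t}\to K$ in Hausdorff metric, and Minkowski's first inequality applied with the pairs $(\wt K_{\varphi,t},K)$ and $(K,\wt K_{\varphi,t})$ for the upper and lower bounds respectively. The upper bound and the treatment of the case $\int h_K\log\varphi\,dS_K=-\infty$ match the paper exactly. The only genuine difference is in how the lower bound is extracted: you decompose $n(V(\wt K_{\varphi,t})-V(K))$ additively, control $n(V_1(K,\wt K_{\varphi,t})-V(K))$ by a second-order Taylor remainder, and then solve a quadratic inequality in $y=V(\wt K_{\varphi,t})-V(K)$, using $y\to0$ to select the correct branch. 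The paper instead first passes to the $(n-1)/n$-th power, noting that $\tfrac1n(V(\wt K_{\varphi,t})-V(K))/t$ and $\tfrac{1}{n-1}V(K)^{1/n}(V(\wt K_{\varphi,t})^{(n-1)/n}-V(K)^{(n-1)/n})/t$ share the same $\liminf$, and bounds the latter in one stroke via
\[
V(\wt K_{\varphi,t})^{1/n}\bigl(V(\wt K_{\varphi,t})^{(n-1)/n}-V(K)^{(n-1)/n}\bigr)\ \ge\ V(\wt K_{\varphi,t})-V_1(K,\wt K_{\varphi,t})=\tfrac1n\!\int_{\sn}(\varphi^t-1)\,h_{\wt K_{\varphi,t}}\,dS_K,
\]
after which uniform convergence $h_{\wt K_{\varphi,t}}\to h_K$ finishes immediately. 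Both routes use the same ingredients; the paper's power-rewriting simply bypasses the Taylor expansion and branch selection that your decomposition requires.
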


\begin{proof}   By elementary calculus, we see that
\[ g(t) =  \frac{\varphi^t-1}t,  \quad t\in \tr\]
is increasing in $t$. In particular,
\be\label{ebd*}   \varphi^t-1 \le t(\varphi-1), \quad \forall t\ge 0.\ee
As $t\to 0^+,$
\[\frac{\varphi^t-1}t \quad {\rm  converges  ~to} \quad \log \varphi \quad {\rm decreasingly}.\]
When $t=1$, the function $\varphi -1 $ is integrable w.r.t the measure $h_KdS_K$. Thus, by the monotone convergence theorem, we have
\be\label{ec2} \int_{\sn} \frac{\varphi^t - 1}t h_KdS_{K}\to \int_{\sn} \log\varphi h_KdS_{K} \quad {\rm decreasingly}, \quad {\rm as}~ t\to 0^+.\ee
Note that, we also have
\[\int_{\sn} \log\varphi h_KdS_{K} \le \int_{\sn} (\varphi - 1) h_KdS_{K}<\infty,\]
but the left hand side might be $-\infty$.

Next, we proceed our proof and split it into three steps.
\vskip 4pt

	\noindent {\it Step 1.} We prove that $S_{\wt K_{\varphi,t}}\to S_K$ weakly, and hence $\wt K_{\varphi,t}\to K$ in Hausdorff metric.

Let $f\in C(\sn)$. 	 By \eqref{ebd*}, we have
\[|f(\varphi^t -1)|  \le \|f\|_\infty \cdot \max\{ 1, \varphi-1\} \in L^1(\sn; S_K).\]
By this and the fact that $\varphi^t\to 1$ a.e. as $t\to 0$, and by the dominated convergence theorem, we obtain
\[\int_{\sn} f(\varphi^t-1) dS_{K}\to 0, \quad{\rm as}~t\to 0,\]
the weak convergence of the surface area measure $S_{\wt K_{\varphi,t}}$.

\vskip 4pt
	\noindent {\it Step 2.} 	
Note that  Step 1 implies that $V(\wt K_{\varphi,t})\to V(K)$. Then,	it is simple to see that
	\be\label{limsups}
\begin{aligned} \frac1n \limsup_{t\to 0^+} \frac{V(\wt K_{\varphi,t}) - V(K)}t =& \frac1{n-1} \limsup_{t\to 0^+}V(\wt K_{\varphi,t})^{1/n}\frac{V(\wt K_{\varphi,t})^{(n-1)/n} - V(K)^{(n-1)/n}}t \\
=& \frac1{n-1} V(K)^{1/n}\limsup_{t\to 0^+} \frac{V(\wt K_{\varphi,t})^{(n-1)/n} - V(K)^{(n-1)/n}}t,  \end{aligned}\ee
and
	\be\label{liminfs}
\begin{aligned} \frac1n \liminf_{t\to 0^+} \frac{V(\wt K_{\varphi,t}) - V(K)}t =& \frac1{n-1} \liminf_{t\to 0^+}V(\wt K_{\varphi,t})^{1/n}\frac{V(\wt K_{\varphi,t})^{(n-1)/n} - V(K)^{(n-1)/n}}t \\
=& \frac1{n-1} V(K)^{1/n}\liminf_{t\to 0^+} \frac{V(\wt K_{\varphi,t})^{(n-1)/n} - V(K)^{(n-1)/n}}t. \end{aligned}\ee

By Minkowski's inequality for the mixed volume, we derive that
\be\label{ebd1}\begin{aligned}
V(K)^{1/n}\cdot \frac{V(\wt K_{\varphi,t})^{(n-1)/n} - V(K)^{(n-1)/n}}t
\le& \frac{V_1(\wt K_{\varphi,t},K) - V(K)}t\\
=&  \frac1n\int_{\sn}  \frac{\varphi^t - 1}t  h_KdS_{K},\end{aligned}\ee
and
\be\label{ebd2}\begin{aligned}
V(K_{\varphi,t})^{1/n}\cdot \frac{V(\wt K_{\varphi,t})^{(n-1)/n} - V(K)^{(n-1)/n}}t
\ge& \frac{V(\wt K_{\varphi,t}) - V_1(K, \wt K_{\varphi,t})}t\\
=&  \frac1n\int_{\sn} \frac{ \varphi^t -1  }t h_{\wt K_{\varphi,t}} dS_{K}.\end{aligned}\ee

By \eqref{ebd1}, \eqref{limsups}, and \eqref{ec2}, we have
\be\label{ine-sup} \limsup_{t\to 0^+} \frac{V(\wt K_{\varphi,t}) - V(K)}t \le \frac1{n-1} \int_{\sn}  \log\varphi    h_KdS_{K}.\ee
If $\int_{\sn} \log\varphi  h_KdS_{K} = -\infty$, then it follows from \eqref{ebd1} and \eqref{limsups} that
\[ \lim_{t\to 0^+} \frac{V(\wt K_{\varphi,t}) - V(K)}t = \limsup_{t\to 0^+} \frac{V(\wt K_{\varphi,t}) - V(K)}t = \int_{\sn}  \log\varphi    h_KdS_{K} = - \infty,\]
and we  complete the proof.

Now we suppose $\int_{\sn}  \log\varphi  h_KdS_{K}$ is finite.
Since $h_K$ is bounded from below, we also have
\[\lh|\int_{\sn}  \log\varphi  dS_{K}\rh| < \infty.\]
Since Step 1 implies that $h_{\wt K_{\varphi,t}}\to h_K$ uniformly,  we have
\[ \int_{\sn} \frac{ \varphi^t -1  }t h_{\wt K_{\varphi,t}} dS_{K} \to \int_{\sn}  \log\varphi  h_KdS_{K}, \quad {\rm as}~ t\to 0.\]
Then, by  \eqref{ebd2}, \eqref{liminfs}, and \eqref{ec2},
we have
\[\liminf_{t\to 0^+} \frac{V(\wt K_{\varphi,t}) - V(K)}t \ge \frac1{n-1}\int_{\sn} \log \varphi h_KdS_K.\]
	As a result, we see that the limit exists, and
	\[ \frac{d}{dt}\Big|_{t=0^+} V(\wt K_{\varphi,t}) =  \frac1{n-1} \int_{\sn} \log \varphi h_{K}dS_K.\]
\end{proof}

\begin{lem}[Two-sided variation formula for the perturbation of surface area measure]\label{var2}
	Let  $K\in \kn_e$. Let	$\varphi$ be an even  function, such that both $\varphi$ and $\varphi^{-1}$ belong to $L^1(\sn;S_K)$, and such that $\varphi$ is positive $S_K$-a.e.  Define $\wt K_{\varphi,t}$ to be the unique origin-symmetric convex body satisfying
	\[dS_{\wt K_{\varphi,t}} = \varphi^t dS_K, \quad t\in [-1,1].\]
	Then,
	\[ \frac{d}{dt}\Big|_{t=0} V(\wt K_{\varphi,t}) =  \frac1{n-1} \int_{\sn} \log \varphi h_{K}dS_K.\]
\end{lem}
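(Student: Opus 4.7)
The plan is to deduce the two-sided variation formula from two applications of Lemma \ref{var}. For the right-derivative at $t=0^+$, I would apply Lemma \ref{var} directly to $\varphi$; its hypotheses (evenness, positivity $S_K$-a.e., and $L^1$) are all in force, so Lemma \ref{var} produces
\[
\frac{d}{dt}\Big|_{t=0^+} V(\wt K_{\varphi,t}) = \frac{1}{n-1}\int_{\sn} \log\varphi \cdot h_K\, dS_K.
\]

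For the left-derivative at $t=0^-$, I would perform the substitution $s=-t$ and use $\varphi^{-s}=(\varphi^{-1})^s$. By uniqueness of the solution to the Minkowski problem with a prescribed (even) surface area measure, $\wt K_{\varphi,-s}$ then coincides with the body $\wt K_{\varphi^{-1},s}$ produced by the construction of Lemma \ref{var} applied to the perturbation function $\varphi^{-1}$. Since $\varphi^{-1}$ is even, positive $S_K$-a.e., and belongs to $L^1(\sn;S_K)$ by the hypothesis of the present lemma, Lemma \ref{var} applies to $\varphi^{-1}$ and gives
\[
\frac{d}{ds}\Big|_{s=0^+} V(\wt K_{\varphi^{-1},s}) = \frac{1}{n-1}\int_{\sn} \log\varphi^{-1}\cdot h_K\, dS_K = -\frac{1}{n-1}\int_{\sn} \log\varphi\cdot h_K\, dS_K.
\]
Composing with $s=-t$ turns this into the statement that the left-derivative of $V(\wt K_{\varphi,t})$ at $t=0$ equals $\frac{1}{n-1}\int_{\sn}\log\varphi\cdot h_K\, dS_K$, which matches the right-derivative.

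The only subtlety is to verify that this common value is a finite real number (so that the two-sided derivative genuinely exists in $\tr$). Lemma \ref{var} records only the upper bound $\int_{\sn}\log\varphi\cdot h_K\, dS_K\le\int_{\sn}(\varphi-1)h_K\, dS_K<\infty$, permitting $-\infty$ in general. Applying the same bound to $\varphi^{-1}$ yields $\int_{\sn}\log\varphi^{-1}\cdot h_K\, dS_K<\infty$, i.e.\ $-\int_{\sn}\log\varphi\cdot h_K\, dS_K<\infty$; together these force $\int_{\sn}\log\varphi\cdot h_K\, dS_K\in\tr$, and both one-sided derivatives are finite and agree. I do not anticipate any substantial obstacle in this argument: everything reduces to symmetric applications of Lemma \ref{var}, with the only point requiring care being the identification $\wt K_{\varphi,-s}=\wt K_{\varphi^{-1},s}$ through the uniqueness part of the even Minkowski problem.
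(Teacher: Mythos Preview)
Your proposal is correct and follows essentially the same route as the paper: reduce the left-derivative to the one-sided Lemma~\ref{var} via the substitution $s=-t$ and the identification $\wt K_{\varphi,-s}=\wt K_{\varphi^{-1},s}$, then note that applying the upper bound from Lemma~\ref{var} to both $\varphi$ and $\varphi^{-1}$ forces $\log\varphi\in L^1(\sn;h_K\,dS_K)$. The paper's proof is terser but identical in substance; your explicit finiteness check is a welcome clarification.
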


\begin{proof}	
It suffices to consider the case $t<0$.
Notice that
\[\varphi^{t} = \lh(\varphi^{-1}\rh)^{-t},\]
with $ -t >0 $. By Lemma \ref{var}, if we denote $s=-t$, and denote $\wt L_s = \wt K_t$, then we obtain
\[ \frac{d}{dt}\Big|_{t=0^-} V(\wt K_{\varphi,t}) = - \frac{d}{dt}\Big|_{s=0^+} V(\wt L_{\varphi,s}) =  \frac1{n-1} \int_{\sn} \log \varphi h_{K}dS_K.\]
Moreover, we know that $\log\varphi \in L^1(\sn;S_K)$ in this case.
\end{proof}

The following lemma will be used.

\begin{lem}\label{smalllem}
	Let $K,L\in \kn_e$.
	If $S_K$ and $S_L$ are absolutely continuous with respect to each other,
	then
	\[\varphi(v) = \frac{dS_L}{dS_K}(v)\]
	is positive and finite $S_K$-a.e. Moreover, $\varphi\in L^1(\sn; S_K)$ and $\varphi^{-1} \in L^1(\sn; S_L)$.
\end{lem}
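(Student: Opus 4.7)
The plan is to treat this as a routine consequence of Radon-Nikodym theory, using only that $S_K$ and $S_L$ are finite Borel measures on $\sn$ (which holds because $K,L$ are convex bodies, so their surface area measures have total mass equal to the surface areas $|\partial K|,|\partial L|<\infty$).

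First I would set up $\varphi = dS_L/dS_K$ by the Radon-Nikodym theorem, which applies since $S_L\ll S_K$. By construction $\varphi\ge 0$ and $\varphi\in L^1(\sn;S_K)$, with
\[
\int_{\sn}\varphi\,dS_K \;=\; S_L(\sn)\;<\;\infty.
\]
Next I would extract positivity $S_K$-a.e.\ from the reverse absolute continuity $S_K\ll S_L$: on the set $N=\{\varphi=0\}$ one has $S_L(N)=\int_N\varphi\,dS_K=0$, hence $S_K(N)=0$ as well. Thus $\varphi$ is positive (and finite, since $\varphi\in L^1$) $S_K$-a.e., and by mutual absolute continuity it is also positive and finite $S_L$-a.e.

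For the final assertion, I would verify that $\varphi^{-1}$ is a Radon-Nikodym derivative of $S_K$ with respect to $S_L$. For any Borel $E\subset\sn$, the chain-rule computation
\[
\int_E \varphi^{-1}\,dS_L \;=\; \int_E \varphi^{-1}\varphi\,dS_K \;=\; S_K(E)
\]
shows that $dS_K/dS_L = \varphi^{-1}$ a.e., and taking $E=\sn$ gives
\[
\int_{\sn}\varphi^{-1}\,dS_L \;=\; S_K(\sn)\;<\;\infty,
\]
so $\varphi^{-1}\in L^1(\sn;S_L)$.

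I do not foresee a real obstacle here; the only point that requires any care is justifying the substitution $dS_L=\varphi\,dS_K$ inside the second integral, which is the standard Radon-Nikodym chain rule applied to the nonnegative measurable integrand $\varphi^{-1}\mathbf{1}_E$ (well-defined a.e.\ by the positivity step above).
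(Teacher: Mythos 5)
Your proposal is correct and follows essentially the same route as the paper's proof: positivity of $\varphi$ $S_K$-a.e.\ from the reverse absolute continuity $S_K\ll S_L$ applied to the zero set of $\varphi$, integrability from the finiteness of the total masses $S_L(\sn)$ and $S_K(\sn)$, and identification of $\varphi^{-1}$ with $dS_K/dS_L$. Your explicit chain-rule verification of $dS_K/dS_L=\varphi^{-1}$ is a small amount of extra detail that the paper compresses into ``likewise,'' but the argument is the same.
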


\begin{proof}
	We first prove that $\varphi = {dS_L}/{dS_K} $ is positive $S_K$-a.e.
	Otherwise, there is $\eta\subset \sn$ with $S_K(\eta)>0$, and
	\[\varphi(v)= 0, \quad \forall v\in\eta.\]
	Then, $S_L(\eta)=0$ but $S_K(\eta)>0$, which contradicts the fact that $S_L$ is  absolutely continuous with respect to $S_K$. Meanwhile, since \[S_L(\sn) = \int_{\sn} \varphi dS_K,\]
it is clear that $\varphi\in L^1(\sn; S_K)$.

	Likewise, $\varphi^{-1} ={dS_K}/{dS_L} $ belongs to $ L^1(\sn; S_:)$, and it	is positive $S_L$-a.e. Since an $S_L$-null set must be an $S_K$-null set, we know that $\varphi$ is finite $S_L$-a.e.
\end{proof}

\begin{lem}\label{lem2.3}
	Let $K,L\in \kn_e$.
	If $S_K$ and $S_L$ are absolutely continuous with respect to each other,
	then
	\[\lim_{t\to0}\wt K_t = K_0, \quad{\rm and} \quad \lim_{t\to1}\wt K_t=K_1.\]
\end{lem}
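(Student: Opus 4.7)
Set $\mu := S_{K_0}+S_{K_1}$ and let $f_i := dS_{K_i}/d\mu$ for $i=0,1$, so that $f_0 + f_1 = 1$ $\mu$-a.e.\ and the defining relation \eqref{d1} becomes $dS_{\wt K_t} = f_0^{1-t}f_1^t\, d\mu$. Since $S_{K_0}$ and $S_{K_1}$ are mutually absolutely continuous, any set on which $f_0$ vanishes is $S_{K_0}$-null, hence $S_{K_1}$-null, hence $\mu$-null; thus $f_0,f_1\in(0,1)$ $\mu$-a.e., and in particular $f_0^{1-t}f_1^t\to f_0$ pointwise $\mu$-a.e.\ as $t\to 0^+$.

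The core of the argument is weak convergence $S_{\wt K_t}\to S_{K_0}$. The weighted AM--GM inequality gives the uniform bound $f_0^{1-t}f_1^t \le (1-t)f_0 + tf_1 \le 1$ for all $t\in [0,1]$, and the constant $1$ is $\mu$-integrable because $\mu(\sn)<\infty$. Hence for every $g\in C(\sn)$, the dominated convergence theorem yields
\[\int_{\sn} g\, dS_{\wt K_t} = \int_{\sn} g\, f_0^{1-t}f_1^t\, d\mu \;\longrightarrow\; \int_{\sn} g\, f_0\, d\mu = \int_{\sn} g\, dS_{K_0},\]
which is the desired weak convergence. The symmetric computation handles $S_{\wt K_t}\to S_{K_1}$ as $t\to 1^-$.

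To convert weak convergence of surface area measures into Hausdorff convergence of the bodies, I would invoke the classical continuity principle for the Minkowski problem: if $S_{L_j}\to S_L$ weakly and $L$ is an $n$-dimensional convex body, then the $L_j$ converge to $L$ in Hausdorff metric, up to translations that vanish in the origin-symmetric setting. The principal obstacle lies here, in uniform boundedness of the family $\wt K_t$. Minkowski's inequality applied to $\wt K_t$ and the unit ball, together with the dominating bound $f_0^{1-t}f_1^t\le 1$, bounds $V(\wt K_t)$ uniformly. For the diameters, one exploits the fact that $S_{K_0}$ is not concentrated on any great subsphere: the Cauchy projection formula $\vol_{n-1}(\wt K_t \mid u^\perp) = \tfrac12 \int_{\sn} |u\cdot v|\, dS_{\wt K_t}(v)$, combined with a compactness argument in $u\in\sn$, yields a uniform lower bound on every $(n-1)$-dimensional projection, which together with the uniform volume bound controls diameters. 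Blaschke selection extracts a Hausdorff-convergent subsequence, and uniqueness in the origin-symmetric Minkowski problem forces the limit to be $K_0$; since every subsequence has the same limit, $\wt K_t\to K_0$.
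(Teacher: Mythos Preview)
Your argument is correct and follows the same strategy as the paper: prove weak convergence $S_{\wt K_t}\to S_{K_i}$ by the dominated convergence theorem, then pass to Hausdorff convergence via the stability of the (even) Minkowski problem. The only cosmetic difference is that the paper works with $\varphi = dS_{K_1}/dS_{K_0}$ and the dominating bound from Step~1 of Lemma~\ref{var}, whereas your AM--GM bound $f_0^{1-t}f_1^t\le 1$ against $\mu=S_{K_0}+S_{K_1}$ is an equally valid (and slightly cleaner) choice; you also spell out the Blaschke-selection argument that the paper leaves implicit.
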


\begin{proof}
By Lemma \ref{smalllem},
\[\varphi = \frac{dS_{K_1}}{dS_{K_0}}\]
is positive and finite $S_{K_0}$-a.e. Therefore,
$\varphi^t \to 1$ as $t\to 0$, and  $\varphi^t \to \varphi$ as $t\to 1$,  $S_{K_0}$-a.e. Similar to Step 1 of the proof of Lemma \ref{var}, using the dominated convergence theorem, we will complete the proof of this lemma.
\end{proof}

\begin{thm}\label{var-thm} Let $K_0,K_1\in \kn_e$.
	If $S_{K_0}$ and $S_{K_1}$ are absolutely continuous with respect to each other,
	then $F(t) = V(\wt K_t)$ is differentiable in $t\in [0,1]$, and
\[F'(t) = \frac d{dt} V(\wt K_t)  = \frac n{n-1}\int_{\sn} \log \lh( \frac{dS_{K_1}}{dS_{K_0}} \rh)   dV_{\wt K_t}. \]
Here the derivatives at endpoints of $[0,1]$ should be understood as one-sided derivatives.
\end{thm}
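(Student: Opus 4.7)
The plan is to compute $F'(t_0)$ at each $t_0\in[0,1]$ by reducing to the variation formulas already established in Lemmas~\ref{var} and~\ref{var2}. Let $\varphi:=dS_{K_1}/dS_{K_0}$, which by Lemma~\ref{smalllem} is positive and finite $S_{K_0}$-a.e. A short computation using $d(S_{K_0}+S_{K_1})=(1+\varphi)\,dS_{K_0}$ rewrites the defining relation~\eqref{d1} as $dS_{\wt K_t}=\varphi^t\,dS_{K_0}$ for $t\in[0,1]$, so that the family enjoys the semigroup identity
\[ dS_{\wt K_{t_0+s}}=\varphi^s\,dS_{\wt K_{t_0}} \quad \text{whenever } t_0,\,t_0+s\in[0,1]. \]

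For an interior time $t_0\in(0,1)$, I would fix $\epsilon\in(0,\min\{t_0,1-t_0\}]$ and set $L:=\wt K_{t_0}$, $\psi:=\varphi^\epsilon$. The semigroup identity identifies the perturbed body $\wt L_{\psi,r}$ from Lemma~\ref{var2} with $\wt K_{t_0+r\epsilon}$ for $r\in[-1,1]$, while the hypotheses of Lemma~\ref{var2} become
\[ \int_{\sn}\psi\,dS_L=S_{\wt K_{t_0+\epsilon}}(\sn)<\infty, \qquad \int_{\sn}\psi^{-1}\,dS_L=S_{\wt K_{t_0-\epsilon}}(\sn)<\infty, \]
which are automatic from the existence of the interpolating convex bodies. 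Lemma~\ref{var2} then yields a two-sided derivative at $r=0$; dividing by $\epsilon$ and using $h_L\,dS_L=n\,dV_L$ produces the claimed formula for $F'(t_0)$.

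At the endpoints, only one-sided integrability is available, so I would invoke Lemma~\ref{var} directly. For $t_0=0$, apply Lemma~\ref{var} with base body $K_0$ and test function $\varphi$: the integrability $\int\varphi\,dS_{K_0}=S_{K_1}(\sn)$ is immediate, and the semigroup identity identifies the resulting family with $\{\wt K_r\}_{r\in[0,1]}$. For $t_0=1$, symmetrically apply Lemma~\ref{var} with base body $K_1$ and test function $\varphi^{-1}$, where $\int\varphi^{-1}\,dS_{K_1}=S_{K_0}(\sn)<\infty$ and the resulting family is $\{\wt K_{1-r}\}_{r\in[0,1]}$; a chain-rule sign flip then converts the right-derivative at $r=0$ into the left-derivative of $F$ at $1$, with the outer sign absorbed by $\log\varphi^{-1}=-\log\varphi$.

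The main technical subtlety is why the $\epsilon$-scaling cannot be avoided at interior points. A naive application of Lemma~\ref{var2} at $\wt K_{t_0}$ with $\psi=\varphi$ would demand $\int\varphi^{t_0+1}\,dS_{K_0}<\infty$, i.e.\ integrability of $\varphi$ to a power strictly exceeding $1$; this is typically false, since the convex function $s\mapsto\log\int\varphi^s\,dS_{K_0}$ is finite on $[0,1]$ but can diverge just outside. Raising $\varphi$ to the small power $\epsilon$ confines the relevant exponents to $[t_0-\epsilon,t_0+\epsilon]\subset[0,1]$, where the needed integrability comes for free from the existence of the log-Blaschke combinations $\wt K_{t_0\pm\epsilon}$.
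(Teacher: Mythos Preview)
Your proposal is correct and follows essentially the same approach as the paper: both reduce the endpoint derivatives to Lemma~\ref{var} and handle interior $t_0$ by rescaling the perturbation to $\varphi^{\varepsilon}$ so that Lemma~\ref{var2} applies at the base body $\wt K_{t_0}$. Your write-up even makes the semigroup identity and the necessity of the $\varepsilon$-rescaling more explicit than the paper does.
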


\begin{proof} Denote
\[\varphi = \frac{dS_{K_1}}{dS_{K_0}}.\]
By Lemma \ref{smalllem},  $\varphi\in L^1(\sn; S_{K_0})$ and $\varphi^{-1} \in L^1(\sn; S_{K_1})$. Then,
for $t=0,1$, the differential of $F(t)$ is already given by  Lemma \ref{var}.

Now, suppose $t\in (0,1)$. Choose $\varepsilon_0>0$ such that $[t-\varepsilon_0,t+\varepsilon_0]\subset (0,1).$ Define
\[\wt L_s = \wt K_{ t+s\varepsilon_0}, \quad  s\in[-1,1].\]
Then,  $S_{\wt L_{-1}}$,  $S_{\wt L_{0}}$, and $S_{\wt L_{1}}$ are absolutely continuous with respect to each other, and
\[\varphi^{-\varepsilon_0} = \frac{dS_{\wt L_{-1}}}{dS_{\wt L_{0}}}\in L^1(\sn;{S_{\wt L_{0}}}), \quad{\rm and}\quad
\varphi^{\varepsilon_0} = \frac{dS_{\wt L_{1}}}{dS_{\wt L_{0}}}\in L^1(\sn;{S_{\wt L_{0}}}).\]
By the two-sided variation formula (Lemma \ref{var2}), we have
\[ \frac d{dt}\Big|_{t=0} V(\wt L_s) \frac 1{n-1} \int_{\sn} \log \lh(\varphi^{\varepsilon_0}\rh)  h_{\wt L_{0}}dS_{\wt L_{0}}
=  \frac {n\varepsilon_0}{n-1}    \int_{\sn} \log  \varphi  ~ dV_{\wt L_{0}}.\]
Since $V(\wt L_s) = V(\wt K_{t+s\varepsilon_0})$, the above formula implies that $F(t)  = V(\wt K_t) $ is differentiable  and
\[F'(t) = \frac n{n-1}\int_{\sn} \log \varphi ~  dV_{\wt K_t}. \]
\end{proof}

Now we give the main result in this section. In the following two results, we split the statements of the conjectures into two parts: inequalities and equality cases. For example, (LBM) inequality means the inequality \eqref{lbmin} without the equality cases.

\begin{thm}[RTF Principle for the inequalities]\label{equi}  The following three items are equivalent.
\begin{itemize}
	\item[(i)] (LBM) inequality is true in $\rn$.
	\item[(ii)] (RLBM) inequality is true in $\rn$.
	\item[(iii)] For all    $K_0, K_1 \in \kn_e$, it follows that
	\be V(\wt K_t) \le V(  K_t).\ee
\end{itemize}	

\end{thm}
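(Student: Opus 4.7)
My strategy is to extract a single auxiliary inequality that links all three items. Namely, I claim that for every $K_0, K_1\in\kn_e$ and every $t\in(0,1)$,
\[
(\ast)\qquad V(\wt K_t)^{(n-1)/n}\,V(K_t)^{1/n}\;\le\;V(K_0)^{1-t}V(K_1)^{t}.
\]
Once $(\ast)$ is in hand, the three equivalences follow by elementary arithmetic, so the main content is to prove $(\ast)$.

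To establish $(\ast)$, I would combine the defining inequalities of both $K_t$ and $\wt K_t$ with H\"older's inequality and Minkowski's first inequality for mixed volumes. The Wulff-shape definition of $K_t$ gives the pointwise bound $h_{K_t}(v)\le h_{K_0}(v)^{1-t}h_{K_1}(v)^{t}$ on $\sn$. Writing $\mu=S_{K_0}+S_{K_1}$ and $f_i=dS_{K_i}/d\mu$, the definition \eqref{d1} reads $dS_{\wt K_t}=f_0^{\,1-t}f_1^{\,t}\,d\mu$, hence
\[
nV_1(\wt K_t,K_t)=\int_{\sn} h_{K_t}\,dS_{\wt K_t}\le \int_{\sn}(h_{K_0}f_0)^{1-t}(h_{K_1}f_1)^{t}\,d\mu.
\]
H\"older's inequality bounds the last integral by
\[
\lh(\int_{\sn} h_{K_0}\,dS_{K_0}\rh)^{1-t}\lh(\int_{\sn} h_{K_1}\,dS_{K_1}\rh)^{t}=n\,V(K_0)^{1-t}V(K_1)^{t}.
\]
Thus $V_1(\wt K_t,K_t)\le V(K_0)^{1-t}V(K_1)^{t}$, and Minkowski's first inequality $V_1(\wt K_t,K_t)\ge V(\wt K_t)^{(n-1)/n}V(K_t)^{1/n}$ yields $(\ast)$.

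With $(\ast)$ in hand the three equivalences close up as the cycle $(i)\Rightarrow(iii)\Rightarrow(ii)\Rightarrow(i)$. Assuming (i), the chain $V(\wt K_t)^{(n-1)/n}V(K_t)^{1/n}\le V(K_0)^{1-t}V(K_1)^{t}\le V(K_t)$ forces $V(\wt K_t)\le V(K_t)$, which is (iii). Assuming (iii), the factorisation $V(\wt K_t)=V(\wt K_t)^{(n-1)/n}V(\wt K_t)^{1/n}\le V(\wt K_t)^{(n-1)/n}V(K_t)^{1/n}\le V(K_0)^{1-t}V(K_1)^{t}$ gives (ii). Assuming (ii), $(\ast)$ rearranges to $V(K_t)\ge (V(K_0)^{1-t}V(K_1)^{t})^n/V(\wt K_t)^{n-1}\ge V(K_0)^{1-t}V(K_1)^{t}$, which is (i).

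The main technical obstacle I anticipate is measure-theoretic rather than conceptual: the H\"older step reads most cleanly when $S_{K_0}$ and $S_{K_1}$ are mutually absolutely continuous, so phrasing the computation intrinsically through the dominating measure $\mu=S_{K_0}+S_{K_1}$ (so that $(h_{K_0}f_0)^{1-t}(h_{K_1}f_1)^{t}$ is interpreted as $0$ wherever either density vanishes) is needed to cover the general case. A short approximation by bodies with smooth positive densities would further simplify the exposition and bypass any remaining concerns.
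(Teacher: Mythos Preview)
Your auxiliary inequality $(\ast)$ and the implications $(i)\Rightarrow(iii)$ and $(iii)\Rightarrow(ii)$ are exactly what the paper does (its inequalities (2.7)--(2.8) combine to give precisely your $(\ast)$). The gap is in your step $(ii)\Rightarrow(i)$: the rearrangement of $(\ast)$ you wrote down has the wrong sign. From $V(\wt K_t)^{(n-1)/n}V(K_t)^{1/n}\le V(K_0)^{1-t}V(K_1)^{t}$ one only gets the \emph{upper} bound
\[
V(K_t)\;\le\;\frac{\bigl(V(K_0)^{1-t}V(K_1)^{t}\bigr)^{n}}{V(\wt K_t)^{\,n-1}},
\]
not the lower bound you claimed. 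Combining this with (ii), namely $V(\wt K_t)\le V(K_0)^{1-t}V(K_1)^{t}$, only tells you that $V(K_t)$ is bounded above by something that is at least $V(K_0)^{1-t}V(K_1)^{t}$ --- which is vacuous. So $(\ast)$ and (ii) alone cannot force (i); the direction $(ii)\Rightarrow(i)$ genuinely requires a new idea.

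In the paper this implication is the substantive step and is handled variationally: one takes a minimizer $K_1$ of $\inf\{\,V(K_0)^{-1}\int_{\sn}\log h_L\,dV_{K_0}-\tfrac1n\log V(L):L\in\kn_e\,\}$ with $V(K_1)=V(K_0)$, uses the result of B\"or\"oczky--Lutwak--Yang--Zhang that any such minimizer satisfies $V_{K_1}=V_{K_0}$ (hence $dS_{K_1}/dS_{K_0}=h_{K_0}/h_{K_1}$ is continuous and positive), and then applies the one-sided variation formula (Lemma~\ref{var}) to differentiate $t\mapsto V(\wt K_t)$ at $t=0^+$. Since (ii) forces $F(t):=V(K_0)^{1-t}V(K_1)^t-V(\wt K_t)\ge 0$ with $F(0)=0$, one gets $F'(0^+)\ge 0$, which unwinds to $\int_{\sn}\log h_{K_1}\,dV_{K_0}\ge\int_{\sn}\log h_{K_0}\,dV_{K_0}$; hence $K_0$ itself is a minimizer, giving (LM) and thus (LBM). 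Your proposal does not touch any of these ingredients, so the cycle does not close.
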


\begin{proof}
Denote $\mu = S_{K_0}+S_{K_1}$, $\varphi_0 = dS_{K_0}/d\mu$, $\varphi_1 = dS_{K_1}/d\mu$. Then
\[ dS_{K_0} = \varphi_0 d\mu, \quad   dS_{K_1} = \varphi_1 d\mu,\]
and
\[dS_{\wt K_t} = \varphi_0^{1-t}\varphi_1^td\mu = \lh(\frac{\varphi_1}{\varphi_0}\rh)^t dS_{K_0}.\]

\vskip 4pt
\noindent{\it Step 1: (i) implies (ii) and (iii).} Assume (LBM) inequality holds.

On one hand,  by H\"older's inequality,  we have
\be\label{e1}\begin{aligned}  \frac1n\int_{\sn} h_t dS_{\wt K_t} &=  \frac1n\int_{\sn} \lh({h_0}\varphi_0\rh)^{1-t} \lh({h_1}\varphi_1\rh)^{t} d\mu \\
	&\le \lh( \frac1n\int_{\sn} h_0 dS_{K_0}\rh)^{1-t} \cdot \lh( \frac1n\int_{\sn} h_1 dS_{K_1}\rh)^{ t}\\
	&=V(  K_0)^{1-t}V(  K_1)^t. \end{aligned} \ee
On the other hand, since $h_{t}\le h_{0}^{1-t}h_1^t$, by  Minkowski's inequality for the mixed volume, we have
\be \label{e2}\frac1n\int_{\sn} h_t dS_{\wt K_t} \ge V_1(\wt K_t, K_t) \ge V(\wt K_t)^{\frac{n-1}n}V( K_t)^{\frac{1}n}.\ee
Since we have assumed that (LBM) inequality holds, combining \eqref{e1} and \eqref{e2},  we obtain (ii):
\[ V(\wt K_t) \le V(  K_0)^{1-t}V(  K_1)^t.\]
It is clear that (iii)  follows immediately from (i) and (ii).
\vskip 4pt
\noindent{\it Step 2.}   We prove that  (iii) implies (ii).
By \eqref{e1} and \eqref{e2}, if (iii) holds, we have
\[V(\wt K_t) \le  V(  K_0)^{1-t}V(  K_1)^t.\]
\vskip 4pt
\noindent{\it Step 3.} We prove that  (ii) implies (i).

Suppose that $K_1$ is a minimizer of (\cite{BLYZ13} guarantee the existence of a minimizer)
\[\inf \Big\{ \frac{1}{V({K_0})}\int_{\sn}\log h_L dV_{K_0} - \frac 1n\log V(L)  : L\in\kn_e  \Big\}\]
satisfying $V(K_1) = V(K_0)$. We infer from \cite{BLYZ13} (via the variation approach) that
\[V_{K_1} = V_{K_0}.\]
Then, we have $\varphi =dS_{K_1}/dS_{K_0} = h_0/h_1$ is continuous and positive on $\sn$. This allows us to use Lemma \ref{var}.
Thus, the following function  $F(t)$
\[ F(t) := V(K_0)^{1-t}V(K_1)^t - V(\wt K_t)  \ge 0, \quad \forall t\in[0,1],\]
is differentiable at $0$.
Since  $F(0) = 0$, the right differential $F_r'(0)$ satisfies
\[ 0 \le F_r'(0) =  - \frac1{n-1}\int_{\sn} \log\varphi dV_{K_0}.\]
Since $\varphi={dS_{K_1}}/{dS_{K_0}} = h_0/h_1$, we have
\[\int_{\sn} \log {h_1} dV_{K_0} \ge \int_{\sn} \log {h_0} dV_{K_0}.\]
Recalling that $h_1$ is a minimizer, we must have that $h_0$ itself is always a minimizer of the problem
\[\inf \Big\{ \frac{1}{V({K_0})}\int_{\sn}\log h_L dV_{K_0} - \frac 1n\log V(L)  : L\in\kn_e  \Big\}.\]
In other words, for all ${K_0}\in \tk_{e,+}^{n,\infty}$ and for all $K_0\in \kn_e$,  there is the (LM) inequality
\[\int_{\sn}\log \frac{h_{K_1}}{h_{K_0}} dV_{K_0} \ge \frac{V(K_0)}n\log \frac{V(K_1)}{V(K_0)}. \]

By  \cite[Lemma 3.2]{BLYZ12}, the (LBM) inequality holds for all $K_0,K_1\in\kn_e$ (without the equality condition).
\end{proof}

A careful examination of the proof of Theorem \ref{equi} leads to the following:
\begin{cor}[RTF Principle for the equality condition]\label{equi-eq} If (LBM) inequality holds for all bodies in $\kn_e$, then the following three items are equivalent.
\begin{itemize}
	\item[(i)] Equality holds in the (LBM) inequality if and only if $K_0$ and $K_1$ have dilated summands.
	\item[(ii)] Equality holds in the (RLBM) inequality if and only if $K_0$ and $K_1$ have dilated summands.
	\item[(iii)] Equality holds in
	\[ V(\wt K_t) \le V(  K_t)\]
 if and only if $K_0$ and $K_1$ have dilated summands.
\end{itemize}	
\end{cor}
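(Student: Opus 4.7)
The approach is to carry the equality analysis through the chain of inequalities that drove the proof of Theorem~\ref{equi}. Combining H\"older's inequality as in \eqref{e1}, the pointwise bound $h_{K_t}\le h_{K_0}^{1-t}h_{K_1}^t$, Minkowski's inequality for the mixed volume, and the assumed (LBM) inequality yields
$$V(\wt K_t)^{(n-1)/n}V(K_t)^{1/n}\ \le\ V_1(\wt K_t,K_t)\ \le\ \frac{1}{n}\int h_{K_0}^{1-t}h_{K_1}^t\,dS_{\wt K_t}\ \le\ V(K_0)^{1-t}V(K_1)^t\ \le\ V(K_t),$$
from which (RLBM) and item~(iii) both drop out.

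The common ``dilated direct summands $\Rightarrow$ equality'' direction I would dispose of by direct computation: if $K_0=\bigoplus_i A_i$ and $K_1=\bigoplus_i\lambda_i A_i$ lie in pairwise orthogonal subspaces $V_i$, then $S_{K_0}$ and $S_{K_1}$ both concentrate on $\bigcup_i(V_i\cap\sn)$, the ratio $\varphi=dS_{K_1}/dS_{K_0}$ is constant on each $V_i\cap\sn$, and a short calculation identifies $\wt K_t=\bigoplus_i\lambda_i^t A_i=K_t$, with $V(\wt K_t)=V(K_t)=V(K_0)^{1-t}V(K_1)^t$. This delivers the ``$\Leftarrow$'' implication in all three items simultaneously.

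For the reverse implication, the key observation is the set identity
$$\{\text{equality in (RLBM)}\}=\{\text{equality in (iii)}\}\subseteq\{\text{equality in (LBM)}\},$$
obtained by substituting $V(\wt K_t)=V(K_0)^{1-t}V(K_1)^t$ or $V(\wt K_t)=V(K_t)$ back into the chain and collapsing every inequality in sight. This immediately yields (i)$\Rightarrow$(ii), (i)$\Rightarrow$(iii), and (ii)$\iff$(iii): under (i) the sandwiched middle set lies between the dilated-direct-summand pairs (forward direction) and $\{\text{equality in (LBM)}\}$, which by (i) is the very same set; the equivalence (ii)$\iff$(iii) is the displayed identity itself.

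The main obstacle is (ii)$\Rightarrow$(i), i.e.\ ruling out pairs $(K_0,K_1)$ with equality in (LBM) but strict inequality in (RLBM). My plan here is to adapt Step~3 of the proof of Theorem~\ref{equi} to the equality regime. Given such a pair, take a minimizer $K_1^*$ of the log-Minkowski functional with $V(K_1^*)=V(K_0)$ (existence from BLYZ13), so that $V_{K_1^*}=V_{K_0}$, and study $F(t)=V(K_0)^{1-t}V(K_1^*)^t-V(\wt K_t^*)\ge 0$ with $F(0)=0$. The idea is to propagate equality in (LBM) for the original pair --- via a BLYZ12-type endpoint differentiation extracting the matching (LM) equality --- into the cone-volume identity $V_{K_1}=c V_{K_0}$; once this identity is in hand, the two-sided variation formula (Theorem~\ref{var-thm}) pins $F\equiv 0$, i.e.\ equality in (RLBM), and invoking (ii) closes the argument. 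The delicate technical point, where I expect the real work to lie, is exactly the extraction of the cone-volume identity from (LBM)-equality at a single interior $t_0$, since (LBM) only provides chord-type control of $V(K_t)$ rather than outright concavity.
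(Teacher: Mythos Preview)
Your treatment of the easy implications is exactly the ``careful examination'' the paper has in mind: the chain
\[
V(\wt K_t)^{(n-1)/n}V(K_t)^{1/n}\le V_1(\wt K_t,K_t)\le \tfrac1n\!\int h_0^{1-t}h_1^{t}\,dS_{\wt K_t}\le V(K_0)^{1-t}V(K_1)^t\le V(K_t)
\]
yields the set identity $\{\text{eq.\ in (RLBM)}\}=\{\text{eq.\ in (iii)}\}\subseteq\{\text{eq.\ in (LBM)}\}$, and together with the direct-summand computation this gives (i)$\Rightarrow$(ii), (i)$\Rightarrow$(iii), and (ii)$\Leftrightarrow$(iii) cleanly. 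This matches the paper.

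The genuine gap is in your plan for (ii)$\Rightarrow$(i). Two points:

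\emph{First}, your worry that ``(LBM) only provides chord-type control of $V(K_t)$ rather than outright concavity'' is misplaced. Under the standing hypothesis that (LBM) holds for \emph{all} pairs, one applies it to $K_{s_0},K_{s_1}$ and uses the elementary containment $(1-\lambda)\cdot K_{s_0}+_0\lambda\cdot K_{s_1}\subset K_{(1-\lambda)s_0+\lambda s_1}$ to conclude that $t\mapsto\log V(K_t)$ is concave. Equality at a single $t_0$ then forces linearity on all of $[0,1]$, and differentiating at $t=0$ gives equality in (LM); after normalising volumes, $K_1$ is a minimiser and BLYZ13 yields $V_{K_1}=V_{K_0}$ as you want.

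\emph{Second}, and more seriously, the assertion that ``the two-sided variation formula (Theorem~\ref{var-thm}) pins $F\equiv 0$'' is not justified and, as stated, does not follow. Knowing $F\ge 0$, $F(0)=F(1)=0$, and even $F'(0)=F'(1)=0$ does not force $F\equiv 0$ without convexity information. What you are missing is the \emph{semigroup property} of the log-Blaschke combination: when $S_{K_0},S_{K_1}$ are mutually absolutely continuous one has
\[
\wt K_{(1-\lambda)s_0+\lambda s_1}=(1-\lambda)\cdot\wt K_{s_0}\#_0\,\lambda\cdot\wt K_{s_1},
\]
so applying the (RLBM) inequality (valid by Theorem~\ref{equi} under the standing hypothesis) to the pair $(\wt K_{s_0},\wt K_{s_1})$ shows that $t\mapsto\log V(\wt K_t)$ is \emph{convex}. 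Hence $V(\wt K_t)$ is convex, $F(t)=V(K_0)-V(\wt K_t)$ is concave, nonnegative, and vanishes at both endpoints, which forces $F\equiv 0$. Now (ii) applies and delivers dilated direct summands. Without this convexity step your Step~3 adaptation stalls: the variation formula by itself only computes $F'$, it does not control the sign of $F''$.
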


\section{A very simple new proof of (LBM) in dimension 2}

Following from the ``reverse-to-forward principle" in Section 2, we give a very simple new proof of  (LBM) in dimension 2.

\begin{thm}   (RLBM) conjecture and (LBM) conjecture are true in dimension 2.
\end{thm}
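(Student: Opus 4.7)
By the reverse-to-forward principle (Theorem~\ref{equi}), the (LBM) inequality, the (RLBM) inequality, and the volume comparison $V(\wt K_t)\le V(K_t)$ of item~(iii) are equivalent in every dimension. So in order to prove (LBM) in $\mathbb{R}^2$, it suffices to verify any one of them for all origin-symmetric planar convex bodies $K_0,K_1$. The plan is to prove (RLBM), i.e.\ $V(\wt K_t)\le V(K_0)^{1-t}V(K_1)^t$.

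The first step is a clean application of H\"older's inequality on $\mu=S_{K_0}+S_{K_1}$. Inserting $h_{K_t}=h_{K_0}^{1-t}h_{K_1}^t$ and $dS_{\wt K_t}=\varphi_0^{1-t}\varphi_1^t\,d\mu$ (with $\varphi_i=dS_{K_i}/d\mu$), the mixed-volume integrand factors as $(h_{K_0}\varphi_0)^{1-t}(h_{K_1}\varphi_1)^t$, and H\"older yields
\[
2V_1(\wt K_t,K_t)=\int_{S^1}h_{K_t}\,dS_{\wt K_t}\le\Big(\int h_{K_0}\,dS_{K_0}\Big)^{1-t}\Big(\int h_{K_1}\,dS_{K_1}\Big)^t=2V(K_0)^{1-t}V(K_1)^t.
\]
Combined with Minkowski's first inequality in $\mathbb{R}^2$, $V_1(A,B)^2\ge V(A)V(B)$, this already produces the symmetric product bound
\[
V(\wt K_t)\,V(K_t)\le V(K_0)^{2(1-t)}V(K_1)^{2t}.
\]

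The second step is to break the symmetry of this product bound and extract the asymmetric (RLBM) inequality. The planar structure provides two pointwise identities: $\rho_{\wt K_t}(\theta)=\rho_{K_0}(\theta)^{1-t}\rho_{K_1}(\theta)^t$ for the curvature radii (immediate from $dS_K=\rho_K\,d\theta$ and the definition of $\wt K_t$), and $h_{K_t}(\theta)=h_{K_0}(\theta)^{1-t}h_{K_1}(\theta)^t$ for the support function. The plan is to combine these via a substitution from $\theta\in S^1$ to the cumulative-cone-volume parameter of a reference body, reducing the claim to a one-dimensional H\"older-type estimate on $[0,1]$ that can be verified directly.

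The main anticipated obstacle is precisely this symmetry-breaking step: the product bound is invariant under the swap $\wt K_t\leftrightarrow K_t$, and a naive pointwise Jensen estimate on $h_{K_t}^2-(h_{K_t}')^2$ fails because this quantity need not be pointwise non-negative on $S^1$ (although its integral is always $2V(K_t)\ge 0$). A substitution to cumulative cone volume, or an equivalent Pr\'ekopa--Leindler-type estimate for the log-Blaschke combination of the surface-area densities, is therefore the key technical ingredient; once (RLBM) is obtained, the full (LBM) conjecture and item~(iii) follow immediately from Theorem~\ref{equi}.
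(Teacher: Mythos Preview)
Your proposal has a genuine gap. The first step---H\"older on $\int h_{K_t}\,dS_{\wt K_t}$ combined with Minkowski's first inequality---is exactly the content of \eqref{e1}--\eqref{e2} in the proof of Theorem~\ref{equi}; it holds in every dimension and yields only the symmetric product bound $V(\wt K_t)\,V(K_t)\le V(K_0)^{2(1-t)}V(K_1)^{2t}$. By itself this implies neither (RLBM) nor (LBM): extracting $V(\wt K_t)\le V(K_0)^{1-t}V(K_1)^t$ from it is equivalent to already knowing $V(K_t)\ge V(K_0)^{1-t}V(K_1)^t$, which is (LBM). So everything rests on your ``second step,'' which you leave as a plan (``substitution to cumulative cone volume'' or ``Pr\'ekopa--Leindler-type estimate'') and explicitly flag as the main obstacle. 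That step is the entire 2-dimensional content, and it is not carried out. (Also, the pointwise identity $\rho_{\wt K_t}=\rho_{K_0}^{1-t}\rho_{K_1}^t$ you invoke presupposes that both $S_{K_i}$ are absolutely continuous with respect to $d\theta$, which is not assumed.)

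The paper sidesteps the symmetry-breaking problem entirely by proving the stronger pointwise inclusion $\wt K_t\subset K_t$, i.e.\ item~(iii) of Theorem~\ref{equi} rather than (ii). The key 2D fact is Cauchy's projection formula: for an origin-symmetric planar body, $4h_K(u^\perp)=\int_{S^1}|u\cdot v|\,dS_K(v)$. Applying H\"older \emph{inside this integral} (with $dS_{\wt K_t}=\varphi_0^{1-t}\varphi_1^t\,d\mu$) gives directly
\[
h_{\wt K_t}(u^\perp)\le h_{K_0}(u^\perp)^{1-t}h_{K_1}(u^\perp)^t
\]
for every $u$, hence $\wt K_t\subset K_t$ and $V(\wt K_t)\le V(K_t)$. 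This single line replaces your missing second step, and the H\"older equality condition (that $\varphi_1/\varphi_0$ be constant $|u\cdot v|\,d\mu$-a.e.\ for every relevant $u$) yields the dilated-direct-summands characterization without further work.
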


\begin{proof}
Let $K_0, K_1 \in \tk^2_e$. Denote $\mu = S_{K_0}+S_{K_1}$, and denote
\[\varphi_0  = \frac{dS_{K_0}}{d\mu}, \quad \varphi_1  = \frac{dS_{K_1}}{d\mu}.\]

Denote $u^\perp$ to be the unit vector  obtained by ratating $u$ by an angle of $\pi/2$ clockwise. Notice that for an origin-symmetric planar convex body, $h_K(u^\perp)$ is equal to the length of the projection of $K$ onto $u^\perp$. Therefore,
by H\"older's inequality,
\be \label{e2*}
\begin{aligned}
	& 4h_{\wt K_t}(u^\perp)\\
	 =& \int_{S^1}|u\cdot v| dS_{\wt K_t}(v) \\
	 =&  \int_{S^1}|u\cdot v|  \varphi_0(v)^{1-t}\varphi_1(v)^td{\mu}(v) \\
	 \le& \lh(\int_{S^1}|u\cdot v| dS_{K_0}(v)\rh)^{1-t}\lh(\int_{S^1}|u\cdot v|  dS_{K_1}(v)\rh)^{t}\\
	 =& 4h_{\wt K_0}(u^\perp)^{1-t}h_{\wt K_1}(u^\perp)^{t}.
\end{aligned}\ee
Since $u$ is arbitrary, we have $\wt K_t \subset K_t$, and hence
\[V(\wt K_t)\le V( K_t).\]
This proves (iii) of Theorem \ref{equi}, and hence proves (RLBM) inequality and (LBM)  inequality  in dimension 2.

Equality holds in (iii) of Theorem \ref{equi} if and  only if equality holds in \eqref{e2*} for each $u$ satisfying $u^\perp\in \Omega = {\rm supp} \mu$. Then, for each $u$ satisfying $u^\perp\in \Omega$,
$\varphi_1/\varphi_0$ is a constant (depending on $u$) w.r.t. the measure $|u\cdot v|d\mu(v)$. There are only two cases: Either $\Omega $ has only two pairs of opposite vectors, or $\varphi(v)$ is a constant a.e. on $\Omega$.
As a result, either $K_0$ and $K_1$ are parallelotope with parallel sides, or $K_0$ and $K_1$ are dilates. In other words, they   have dilated direct summands.

This proves (iii) in Corollary \ref{equi-eq}, and hence proves  the equality cases of (RLBM) conjecture and (LBM) conjecture in dimension 2.
\end{proof}

\section{  (LM) in the case that the `base body' is a zonoid}

We will prove (LM) in the case that $K$ is a zonoid,
\[ \int_{\sn}\log\frac{h_L}{h_K}dV_K \ge \frac{V(K)}n\log\frac{V(L)}{V(K)},\]
and we will characterize the full equality conditions. The inequality was proved by  van Handel \cite{H2}, via the estimate of spectral-gap of the Hilbert-Brunn-Minkowski operator defined by Kolesnikov-Milman \cite{KM}. The philosophical background of his proof was the theories developed by  Kolesnikov-Milman \cite{KM} (local version) and Chen-Huang-Li-Liu \cite{CHLL} (local-to-global principle).
Our proof is based on the ``reverse-to-forward" principle, without the use of previous theories.
The characterization of full equality conditions turns to be new.


\subsection{Reverse-to-forward principle for (LM)}

\begin{lem}[Reverse-to-forward principle for (LM)]\label{rtf-lm}
	Let $K\in \kn_e$.	Then, in the subsequent context,  (i) entails   (ii).
	\begin{itemize}
		\item[(i)] (Weak Reverse-log-Minkowski inequality)  For all $L\in \kn_e$ such that  $S_K$ and $S_L$ are absolutely continuous with respect to each other, it holds that
		\be\label{eq-weaklm} \frac1{n-1}\int_{\sn}\log \lh(\frac{dS_L}{dS_K}\rh)   dV_{K} \le  \int_{\sn}\log \frac{h_{L} }{h_{K} }  dV_{K}.\ee	
		
		\item[(ii)] (Log-Minkowski inequality) For all $L\in \kn_e$, it holds that
		\be \label{lm} \int_{\sn}\log \frac{h_{L} }{h_{K} }  dV_{K} \ge \frac{V(K)}n\log\frac{V(L)}{V(K)}. \ee
		
	\end{itemize}
\end{lem}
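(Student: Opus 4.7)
The plan is to adapt Step 3 of the proof of Theorem \ref{equi} to the log-Minkowski setting. Consider the scale-invariant log-Minkowski functional
\[F(M) := \frac{1}{V(K)}\int_{\sn}\log h_M\,dV_K - \frac{1}{n}\log V(M), \qquad M\in\kn_e.\]
The target inequality \eqref{lm} is exactly $F(M)\ge F(K)$ for every $M\in\kn_e$, so it suffices to show that $K$ itself is a minimizer of $F$ over $\kn_e$.

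First, I would invoke the existence result of B\"or\"oczky-Lutwak-Yang-Zhang \cite{BLYZ13} to produce a minimizer $L\in\kn_e$ of $F$, normalized so that $V(L)=V(K)$; the variational characterization therein yields the further identity of cone-volume measures $V_L=V_K$. Since $dV_M=\frac{1}{n}h_M\,dS_M$, this identity rewrites as $h_L\,dS_L=h_K\,dS_K$, and since $h_K,h_L$ are continuous and strictly positive on $\sn$ we obtain
\[\varphi := \frac{dS_L}{dS_K} = \frac{h_K}{h_L} \in C(\sn),\]
positive everywhere. In particular $S_K$ and $S_L$ are mutually absolutely continuous, so hypothesis (i) applies to this $L$.

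Second, I would substitute $\varphi=h_K/h_L$ into \eqref{eq-weaklm}. The left-hand side becomes $-\frac{1}{n-1}\int_{\sn}\log(h_L/h_K)\,dV_K$, and transposing to the right-hand side collapses the inequality to
\[\frac{n}{n-1}\int_{\sn}\log\frac{h_L}{h_K}\,dV_K \ge 0.\]
Because $V(L)=V(K)$, this is precisely $F(L)\ge F(K)$. Combined with the minimality $F(L)\le F(K)$ it forces $F(K)=F(L)$, so $K$ is itself a minimizer of $F$. The scale invariance $F(\lambda M)=F(M)$ then upgrades this to $F(M)\ge F(K)$ for every $M\in\kn_e$, which is \eqref{lm}.

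The only non-trivial external input is the existence of a minimizer $L$ satisfying $V_L=V_K$, which is the content borrowed from \cite{BLYZ13}; once granted, everything else is a one-line algebraic manipulation. The main obstacle to watch is ensuring that $V_L=V_K$ really does reduce the Radon-Nikodym derivative to the explicit ratio $h_K/h_L$ (so that hypothesis (i) becomes directly usable), and that the resulting bound $F(L)\ge F(K)$ is strong enough to pin $K$ down as a minimizer rather than yielding only a weaker one-sided comparison.
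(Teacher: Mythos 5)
Your proposal is correct and follows essentially the same route as the paper's proof: obtain a normalized minimizer from \cite{BLYZ13}, use the variational identity $V_L=V_K$ to identify $dS_L/dS_K=h_K/h_L$, feed this into \eqref{eq-weaklm} to conclude $\int_{\sn}\log(h_L/h_K)\,dV_K\ge 0$, and deduce that $K$ itself is a minimizer, finishing by homogeneity. The algebraic reduction to $\frac{n}{n-1}\int_{\sn}\log(h_L/h_K)\,dV_K\ge 0$ is exactly the step the paper performs.
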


Notice that under the smoothness assumption that $\partial K$ is $C^2$, Ma-Zeng-Wang \cite{MZW} also established the equivalence result. 

\begin{proof}
	Assume (i) is true.
	Suppose $M\in \kn_e$ is a minimizer (its existence is provided by \cite{BLYZ13}) of
	\be\label{miniprob}\inf \Big\{ \int_{\sn}\log h_L dV_{K} : L\in\kn_e  ~\text{and}~V(L) = V(K)  \Big\}.\ee
	We infer from \cite{BLYZ13} (via a variation approach) that
	\[V_M = V_{K},\]
	which implies that $\varphi =dS_M/dS_{K} = h_{K}/h_M$, which is positive and continuous. Then, by \eqref{eq-weaklm} and taking $L=M$, we have
	\[\frac1{n-1}\int_{\sn}\log  \frac{h_K}{h_M}    dV_{K} \le  \int_{\sn}\log \frac{h_{M} }{h_{K} }  dV_{K}.\]
	As a result,
	\[\int_{\sn}\log  {h_{M} }   dV_{K}\ge \int_{\sn}\log  h_{K}    dV_{K}.\]
	But $M$ is a minimizer  of \eqref{miniprob}, $K$ must also be a minimizer. This proves the inequality
	\[  \int_{\sn}\log \frac{h_L}{h_K} dV_{K} \ge 0,  \]
	for all $L\in\kn_e $ with $V(L) = V(K)$.  Then, proposition (ii) follows immediately from   homogeneity.
\end{proof}

\begin{lem}[The reverse-log-Minkowski inequality for projections] \label{thm-weakrlm}
	Let $K, L\in \kn_e$ satisfy that $S_K$ and $S_L$ are absolutely continuous with respect to each other. Then, for each $u\in \sn,$	
		\be\label{eq-rlm-proj-nd}\int_{\sn} \log\varphi(v) |u\cdot v| dS_{K_0}(v) \le2{\vol_{n-1}(P_{u^\perp} K_0)} \log\lh(\frac{\vol_{n-1}(P_{u^\perp} K_1)}{\vol_{n-1}(P_{u^\perp} K_0)}\rh),\ee
	with equality if and only if there is a constant $c(u)$ and an $S_K$-full measure set $\Omega$, such that
	\[ \frac{dS_L}{dS_K}(v) = c(u), \quad \forall v\in \Omega  \setminus u^\perp.\]

\end{lem}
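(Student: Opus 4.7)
The plan is to deduce \eqref{eq-rlm-proj-nd} from a single application of Jensen's inequality against a probability measure weighted by $|u\cdot v|$, using Cauchy's projection formula to identify both sides. The key identity is that, for any origin-symmetric convex body $M\in\kn_e$ and any $u\in\sn$,
\[ 2\vol_{n-1}(P_{u^\perp}M) \;=\; \int_{\sn}|u\cdot v|\,dS_M(v), \]
so the quantities $\vol_{n-1}(P_{u^\perp}K)$ and $\vol_{n-1}(P_{u^\perp}L)$ appearing on the right-hand side of \eqref{eq-rlm-proj-nd} can be rewritten purely in terms of $S_K$ and $S_L$.

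I would set $Z_u := \int_{\sn}|u\cdot v|\,dS_K(v) = 2\vol_{n-1}(P_{u^\perp}K)$, which is strictly positive since $K\in\kn_e$ has non-empty interior, and then introduce the Borel probability measure $d\mu_u := Z_u^{-1}|u\cdot v|\,dS_K(v)$ on $\sn$. Because $S_K$ and $S_L$ are mutually absolutely continuous, Lemma \ref{smalllem} guarantees that $\varphi := dS_L/dS_K$ is positive $S_K$-a.e.\ and belongs to $L^1(\sn;S_K)$, so in particular to $L^1(\sn;\mu_u)$. Applying Jensen's inequality to the concave function $\log$, and using $\varphi\,dS_K = dS_L$ to rewrite $\int\varphi\,d\mu_u$, I obtain
\[ \int_{\sn}\log\varphi\, d\mu_u \;\le\; \log\int_{\sn}\varphi\, d\mu_u \;=\; \log\!\left(\frac{1}{Z_u}\int_{\sn}|u\cdot v|\,dS_L(v)\right) \;=\; \log\!\left(\frac{2\vol_{n-1}(P_{u^\perp}L)}{Z_u}\right). \]
Multiplying through by $Z_u$ and re-substituting Cauchy's formula on both sides yields exactly \eqref{eq-rlm-proj-nd}. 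If the left-hand side is $-\infty$, the inequality is of course trivially true, and the argument above is the only content.

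For the equality characterization, Jensen's inequality becomes an equality if and only if $\varphi$ is $\mu_u$-almost everywhere constant. Because the Radon--Nikodym density of $\mu_u$ with respect to $S_K$ is $|u\cdot v|/Z_u$, which vanishes precisely on the great subsphere $u^\perp\cap\sn$, this is equivalent to the existence of a constant $c(u)$ with $\varphi\equiv c(u)$ on an $S_K$-full measure set $\Omega$ minus $u^\perp$, which is the stated condition. The only subtle point I anticipate is that $S_K$ may assign positive mass to $u^\perp\cap\sn$; the characterization is formulated precisely to excise that great subsphere, so Jensen's equality condition transfers without loss of information and no additional work is needed.
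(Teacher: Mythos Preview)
Your proof is correct. Both your argument and the paper's rest on the concavity of $\log$ and Cauchy's projection formula, but they package the convexity step differently.

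The paper introduces the log-Blaschke path $\wt K_t$ with $dS_{\wt K_t}=\varphi^t\,dS_{K_0}$, applies H\"older's inequality to $\int_{\sn}|u\cdot v|\varphi(v)^t\,dS_{K_0}(v)$ to obtain the intermediate reverse-log-Brunn--Minkowski for projections
\[
\vol_{n-1}(P_{u^\perp}\wt K_t)\;\le\;\vol_{n-1}(P_{u^\perp}K_0)^{1-t}\vol_{n-1}(P_{u^\perp}K_1)^{t},
\]
deduces that $F(t)=\log\vol_{n-1}(P_{u^\perp}\wt K_t)$ is convex on $[0,1]$, and then reads off \eqref{eq-rlm-proj-nd} from $F'(0)\le F(1)-F(0)$. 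Your route bypasses the path $\wt K_t$ entirely and obtains \eqref{eq-rlm-proj-nd} in one stroke from Jensen's inequality against the probability measure $\mu_u$. The two are essentially equivalent (differentiating the H\"older bound at $t=0$ \emph{is} Jensen), so your version is shorter; the paper's version has the minor narrative advantage of exhibiting the full log-convexity of projection volumes along the log-Blaschke combination, which ties into the (RLBM) theme, but that extra content is not used elsewhere in the proof of Theorem~\ref{ndlm}. Your equality analysis via the Jensen equality case, together with the observation that $\mu_u$ and $S_K$ are mutually absolutely continuous off $u^\perp$, matches the paper's conclusion exactly.
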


\begin{proof}
	Denote $\varphi =  {dS_L}/{dS_K}$. By Lemma \ref{smalllem}, $\varphi$ is positive and finite $S_K$-a.e.
	
	Let $K_0=K$ and $K_1=L$ , and consider their log-Blaschke combination $\wt K_t =(1-t)\cdot \wt K_{0} \#_0  t\cdot \wt  K_{1}$. For each $u\in \sn$, by H\"older's inequality, we have \footnote{Here we use $|P_{u^\perp} K|$ to denote the $(n-1)$-volume $\vol_{n-1}(P_{u^\perp} K)$. We are doing this to compare which one looks better.}
	\be\label{3dproj}
	\begin{aligned}
		|P_{u^\perp} \wt K_t| =& \frac12\int_{\sn} |u\cdot v| dS_{\wt K_t}(v) \\
		=&\frac12\int_{\sn}  |u\cdot v| \varphi(v)^t dS_{ K_0}(v)\\
		\le & \lh(\frac12\int_{\sn} |u\cdot v| dS_{K_0}(v)\rh)^{1-t}\lh(\frac12\int_{\sn} |u\cdot v|\varphi(v)  dS_{K_0}(v)\rh)^{t}\\
		=& |P_{u^\perp}  K_0|^{1-t}|P_{u^\perp} K_1|^t.
	\end{aligned}  \ee
	Since for any $0\le t_0<t_1\le 1$, there is
	\[\wt K_{(1-s)t_0 + st_1} = (1-s)\cdot \wt K_{t_0} \#_0  s\cdot \wt  K_{t_1},\]
	the inequality \eqref{3dproj} also holds for  $\wt K_{t_0}$ and $\wt K_{t_1}$, namely,
	\be\label{eq-rlbm-proj}|P_{u^\perp} \wt K_{(1-s)t_0 + st_1}| \le |P_{u^\perp} \wt K_{t_0}|^{1-s}|P_{u^\perp} \wt K_{t_1}|^s, \quad \forall s\in (0,1).\ee
	Denote $\mu_u$ to be the measure
	\[d\mu_u(v) = |u\cdot v|dS_{K}(v).\]
	Since $\varphi$ is positive $S_K$-a.e.,
	equality holds in \eqref{eq-rlbm-proj} if and only if
	$\varphi(v)$ is a constant (depending on $u$)   $\mu_u$-a.e.
	Then, \eqref{eq-rlbm-proj} implies that
	\[F(t) = \log\lh(\vol_{n-1}(P_{u^\perp}\wt K_t)\rh)\]
	is convex on $[0,1]$, and is strictly convex if $\varphi(v)$ is not a constant $\mu_u$-a.e. It follows that $F'(0) \le F(1) - F(0)$, which is equivalent to
	\be\label{eq-rlm-proj}\int_{\sn} \log\varphi(v) |u\cdot v| dS_{K_0}(v) \le2{\vol_{n-1}(P_{u^\perp} K_0)} \log\lh(\frac{\vol_{n-1}(P_{u^\perp} K_1)}{\vol_{n-1}(P_{u^\perp} K_0)}\rh).\ee
	If equality holds in \eqref{eq-rlm-proj}, then $\varphi$ cannot be strictly convex, and hence
	there is a constant $c(u)$ such that
	$\varphi (v) = c(u)$,    $\mu_u$-{\rm a.e.},
	Therefore, there is an $S_K$-full measure set $\Omega$, such that
		\[ \varphi(v) = c(u), \quad \forall v\in \Omega  \setminus u^\perp.\]
	This shows the necessary condition for the equality to hold, and the sufficient condition is evident.
\end{proof}

\subsection{A Lemma  to characterize the equality conditions}

The following lemma of  relations is the crucial part of characterizing the equality conditions of the weak reverse-log-Minkowski inequality and (LM).

\begin{lem}[Lemma of relations]\label{lem-relation}
	Let $\Omega, \omega \subset \sn$ satisfy ${\rm span}(\Omega) = {\rm span} (\omega) = \rn$. Define the relations $\bowtie$ and $\sim$ as follows.
	For $v\in \Omega$ and $u\in \omega$, we say $v \bowtie u$ (or $u\bowtie v$), if $v\notin u^\perp$.
	For $v_1, v_2\in \Omega$, we say $v_1\sim v_2$, if there is $u\in \omega$ such that $v_1\bowtie u$ and $v_2\bowtie u$;
	For $u_1, u_2\in \omega,$ we say  $u_1\sim u_2$, if there is $v\in \Omega$ such that $u_1\bowtie v$ and $u_2\bowtie v$.
	
	Define the equivalence $\simeq$ as follows. For $v_1,v_2\in \Omega$, we say $v_1\simeq v_2$, if there are if there are $v_{i_1},...,v_{i_j}\in \Omega$, such that
	\[v_1 \sim v_{i_1} \sim \cdots \sim v_{i_j} \sim v_2.\]
	For $u_1,u_2\in \omega$, we say $u_1\simeq u_2$, if there are if there are $u_{i_1},...,u_{i_k}\in \omega$, such that
	\[u_1 \sim u_{i_1} \sim \cdots \sim u_{i_k} \sim u_2.\]
	
	Suppose that the quotient spaces $\lh(\Omega/\simeq\rh) = \{[v_i]: i\in I\}$ and $\lh(\omega/\simeq\rh) = \{[u_j]: i\in J\}$	with index sets $I$ and $J$. Denote the linear spans of the representatives by
	\[V_i = {\rm span} \{v\in \Omega : v\in [v_i] \}, \quad i\in I\]
	and
	\[U_j = {\rm span} \{u\in \omega : u\in [u_j] \}, \quad j\in J.\]
	
	Then, $I = J$ is a finite set, and we denote it by $I= J=\{1,...,i_0\}$. Moreover, $\rn$ can be decomposed into the direct sum of $V_i$'s as well as the direct sum of $U_j$'s, that is
	\[\rn = V_1\oplus\cdots\oplus V_{i_0} = U_1\oplus\cdots\oplus U_{i_0}  .\]
\end{lem}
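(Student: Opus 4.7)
\textbf{Proof plan for Lemma \ref{lem-relation}.} The heart of the argument is to produce a canonical bijection between the quotient sets $\Omega/\simeq$ and $\omega/\simeq$; once this is in place, the direct-sum decomposition falls out of a short dimension count. First I will show that every $v\in\Omega$ has at least one ``partner'' in $\omega$: since $v\neq 0$ and $\operatorname{span}\omega=\rn$, some $u\in\omega$ satisfies $u\cdot v\neq 0$, i.e.\ $v\bowtie u$. This ensures the map $\Phi([v])=[u]$ is at least defined on representatives; the first real task is to check that it descends to the quotient.

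For well-definedness, suppose $u_1\bowtie v_1$ and $u_2\bowtie v_2$ with $v_1\simeq v_2$. I will unwind the equivalence $v_1\simeq v_2$ into a finite chain $v_1=w_0\sim w_1\sim\cdots\sim w_m=v_2$, where each link $w_{k-1}\sim w_k$ is witnessed by some $u^{(k)}\in\omega$ with $w_{k-1}\bowtie u^{(k)}$ and $w_k\bowtie u^{(k)}$. Then the pairs $(u_1,u^{(1)})$ through $w_0$, $(u^{(k)},u^{(k+1)})$ through $w_k$, and $(u^{(m)},u_2)$ through $w_m$ give $u_1\simeq u_2$ in $\omega/\simeq$. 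Symmetrically I define $\Psi\colon \omega/\simeq\to\Omega/\simeq$, and the same chain argument shows $\Psi\circ\Phi=\mathrm{id}$, so $\Phi$ is a bijection. Relabeling, I may write both quotients as $\{[v_1],\ldots,[v_{i_0}]\}$ and $\{[u_1],\ldots,[u_{i_0}]\}$ matched by $\Phi$.

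Next I extract the key orthogonality: if $i\neq j$, then every $v\in[v_i]$ is orthogonal to every $u\in[u_j]$. Otherwise $v\bowtie u$, which would force $[u]=\Phi([v])=[u_i]$, contradicting $u\in[u_j]$. By linearity this upgrades to $V_i\perp U_j$ for all $i\neq j$, so
\[
V_i\subseteq\Bigl(\sum_{j\neq i}U_j\Bigr)^{\!\perp},\qquad
\dim V_i\le n-\dim\sum_{j\neq i}U_j.
\]
Finiteness of $I$ is immediate: $\rn=\operatorname{span}\Omega=V_1+\cdots+V_{|I|}$ with each $V_i$ nontrivial, so $|I|\le n$.

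Finally, the dimension count. From $\sum_j U_j\supseteq\operatorname{span}\omega=\rn$ I get $\dim\sum_{j\neq i}U_j\ge n-\dim U_i$, hence $\dim V_i\le \dim U_i$. By the symmetric argument (swapping the roles of $\Omega$ and $\omega$) I also get $\dim U_i\le\dim V_i$, so $\dim V_i=\dim U_i$ and equality holds throughout; in particular $\dim\sum_{j\neq i}U_j=n-\dim U_i$, which by the dimension formula $\dim(A+B)=\dim A+\dim B-\dim(A\cap B)$ applied to $A=U_i$, $B=\sum_{j\neq i}U_j$ forces $U_i\cap\sum_{j\neq i}U_j=\{0\}$. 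This gives $\rn=U_1\oplus\cdots\oplus U_{i_0}$, and the analogous statement for the $V_i$'s follows by symmetry. The main obstacle I foresee is the bookkeeping in the chain argument for well-definedness of $\Phi$; once that is handled cleanly, the rest is a short linear-algebra computation driven entirely by the perpendicularity $V_i\perp U_j$ for $i\neq j$.
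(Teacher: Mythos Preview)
Your approach is correct in spirit and differs from the paper's. The paper proceeds iteratively: it picks one pair $v_1\bowtie u_1$, sets $\wt V_2=\operatorname{span}\bigl(\bigcup_{i\ne 1}[v_i]\bigr)$ and $\wt U_2$ analogously, proves $V_1\perp\wt U_2$ and $U_1\perp\wt V_2$, and then uses the single dimension chain
\[
n\le \dim V_1+\dim\wt V_2\le (n-\dim\wt U_2)+(n-\dim U_1)\le n
\]
to split off $\rn=V_1\oplus\wt V_2=U_1\oplus\wt U_2$, after which it recurses inside $\wt V_2,\wt U_2$. Your version is global: you construct the bijection $\Phi\colon\Omega/\!\simeq\,\to\,\omega/\!\simeq$ explicitly, deduce the full orthogonality $V_i\perp U_j$ for $i\ne j$ in one stroke, and then finish with a symmetric dimension count. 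Your route makes the ``canonical pairing'' between the two quotient sets visible and avoids the induction; the paper's route avoids having to name $\Phi$ at all but trades this for an iterative argument.

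There is one genuine slip to repair. Your sentence ``Finiteness of $I$ is immediate: $\rn=V_1+\cdots+V_{|I|}$ with each $V_i$ nontrivial, so $|I|\le n$'' is false as written: infinitely many nontrivial subspaces can sum to $\rn$ (think of all lines through the origin in $\rn$). The fix is immediate from the orthogonality you have already established: for each $i$ choose $v_i\in[v_i]$ and $u_i\in[u_i]$ with $v_i\bowtie u_i$; then $v_i\cdot u_j=0$ for $i\ne j$ and $v_i\cdot u_i\ne 0$, so any finite subfamily of the $v_i$'s is linearly independent, forcing $|I|\le n$. Alternatively, just postpone finiteness: your inequality chain $\dim V_i\le n-\dim\sum_{j\ne i}U_j\le\dim U_i$ and its symmetric counterpart work verbatim for arbitrary $I$, and the resulting conclusion $U_i\cap\sum_{j\ne i}U_j=\{0\}$ for every $i$ with each $U_i\ne\{0\}$ already forces $|I|\le n$. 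Either patch completes your argument.
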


\begin{proof}
	It is clear that   $\simeq$ is an equivalence relation. By the assumption  ${\rm span}(\Omega) = {\rm span} (\omega) = \rn$, for each $v\in \Omega$, we can find a $u\in\omega$ so that
	$v\bowtie u$.
	
	With out loss of generality, assume $v_1\bowtie u_1$, ${\rm span} [v_1] = V_1$, and ${\rm span} [u_1] = U_1$. Denote
	\[\wt V_2 = {\rm span}\{v \in [v_i]: \quad \forall i\in I ~\text{with}~  i\ne 1\}  \]
	and
	\[\wt U_2 = {\rm span}\{u \in [U_j]: \quad \forall j\in J ~\text{with}~  j\ne 1\}.  \]
	By the definition of $\bowtie$, for any $v\in[v_1]$ and $u\in [u_j]$ with $j\ne 1$, we must have
	$v\perp u$. In fact, there are $v_{i_1},...,v_{i_k}\in\Omega$ and $u_{i_1},...,u_{i_k},u_{i_{k+1}}\in\omega$ such that
	\[v\bowtie u_{i_1} \bowtie v_{i_1}\bowtie \cdots u_{i_k}
	\bowtie v_{i_k}  \bowtie u_{i_{k+1}}\bowtie v_1 \bowtie u_1, \]
	If $u\bowtie v$, we will have $u\sim u_1$, which contradits the assumption $u\notin [u_1]$.
	
	As a result, we have $V_1\perp \wt U_2$, and  similarly $U_1\perp \wt V_2$. This, together with the fact that
	\[\rn = V_1 + \wt V_2 =  U_1 + \wt U_2,\]
	implies
	\[ n\le \dim V_1 + \dim \wt V_2 \le (n-  \dim \wt U_2) + (n-  \dim U_1) \le n.\]
	Then, it must be the case that
	\[V_1 =  \wt U_2^\perp, \quad U_1 =  \wt V_2^\perp,\]
	and $\rn$ is the direct decomposition of $V_1, \wt V_2$ as well as $U_1, \wt U_2$:
	\[\rn =  V_1 \oplus \wt V_2 =  U_1 \oplus \wt U_2.\]
	
	Observe the following facts:
	\begin{itemize}
		\item 	$\Omega =  (\Omega \cap   V_1)\cup (\Omega \cap \wt V_2),$  {\rm and} $(\omega \cap   U_1)\cup (\omega \cap \wt U_2)$;
		\item $V_i\subset \wt V_2$ and $U_i\subset \wt U_2$ for all $i\ne1$;
		\item For $v\in\Omega\cap  \wt V_2$ and $v'\in \Omega$, if $v\simeq v'$, then $v'$ must belong to $\Omega\cap \wt V_2$.
	\end{itemize}
	Then, the new quotient spaces $(\Omega\cap\wt V_2 )/\simeq$ and $(\omega\cap\wt U_2) /\simeq$ are exactly
	$\{[v_i] : i\in I\setminus\{1\}\}$ and $\{[u_j] : j\in J\setminus\{1\}\}$.
	
	Thus, we can do similar things in $\Omega\cap\wt V_2$ and $\omega\cap\wt U_2$,
	and after finite steps, we will  complete the proof.
\end{proof}


\subsection{The log-Minkowski inequality for that the `base' convex body is a zonoid}

We say $K$ is a zonoid in $\rn$ with {\it generating measure} $\rho$, if
\[h_K(v) = \int_{\sn}|u\cdot v| d\rho(v), \]
where $\rho$ is a nonnegative finite Borel measure.

Suppose $Z$ is a zonoid with generating measure $\rho$.
Since the mixed surface area measure $S(K_1,...,K_{n-1},\cdot)$ is linear in each argument $K_i$, it follows from an approximation process that
\be\label{zonoidmv}
\frac1n\int_{\sn}\int_{\sn}f(w)dS(K,..., K,[-u,u],w)d\rho(u) =	  \frac1{n} \int_{\sn}f(v)dS(K,..., K,Z,v),
\ee
for each $f\in C(\sn)$. This formula can also be viewed as a corollary of  \cite[Theorem 5.3.2]{Sch}.

Since the mixed volume $V(K_1,...,K_{n-1},K_n)$ is linear in each argument,
\[v^{(n-1)} (P_{u^\perp}K,...,P_{u^\perp}K,P_{u^\perp}L) = \int_{\sn} |u\cdot v| d S(K,...,K,L,v) = \int_{\sn} h_L d S(K,...,K,[-u,u],v).\]
Here $v^{(n-1)}$ denotes the $(n-1)$-dimensional mixed volume.
Since $L$ is arbitrary, and since the linear span of the set of support functions is dense in $C^2(\sn)$, we
have
\[ \int_{\sn\cap u^\perp} f(v) d S_{P_{u^\perp}K}^{(n-1)}(v) = \int_{\sn} f(v) d S(K,...,K,[-u,u],v).\]
Here $S^{(n-1)}$ means the $(n-1)$-dimensional mixed surface area measure in $u^\perp$.

Therefore,
for each $u\in \sn$, there is
\be\label{sam-proj}
S_{P_{u^\perp}K}^{(n-1)}(\omega) 	=  S(K,...,K,[-u,u], \omega), \quad {\rm for~Borel~} \omega\subset \sn. \ee
This can also be seen in van Handel's papers [H1,H2].

	\begin{thm}\label{ndlm}
		Let $K$ be a zonoid in $\rn$ with generating measure $\rho$, and let $L\in \tk_e^n$. Then,
	\be\label{lm-zonoid-p} \int_{\sn}\log\frac{h_L}{h_K}dV_K \ge \frac{V(K)}n\log\frac{V(L)}{V(K)}, \ee
		with equality if and only if $K$ and $L$ have dilated direct summands. 	
	\end{thm}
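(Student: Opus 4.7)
The plan is to induct on the dimension $n$, with the base case $n=2$ provided by the proof in Section 3. Fix $n\ge 3$ and assume the theorem for dimensions $2,\ldots,n-1$. Let $K$ be a zonoid in $\mathbb{R}^n$ with generating measure $\rho$. By Lemma \ref{rtf-lm}, it suffices to establish the weak reverse-log-Minkowski inequality \eqref{eq-weaklm} for every $L\in \kn_e$ such that $S_K$ and $S_L$ are mutually absolutely continuous; the log-Minkowski inequality \eqref{lm-zonoid-p} for arbitrary $L$ then follows from the conclusion of that lemma.

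Write $\varphi = dS_L/dS_K$. For each $u\in\sn$, Lemma \ref{thm-weakrlm} applied to $(K,L)$ gives
\[
\int_{\sn}\log\varphi(v)\,|u\cdot v|\,dS_K(v)\ \le\ 2\vol_{n-1}(P_{u^\perp}K)\log\frac{\vol_{n-1}(P_{u^\perp}L)}{\vol_{n-1}(P_{u^\perp}K)}.
\]
Integrating against $\tfrac{1}{n(n-1)}d\rho(u)$, applying Fubini, and using the zonoid representation $h_K(v)=\int|u\cdot v|\,d\rho(u)$ turn the left-hand side into $\tfrac{1}{n-1}\int \log\varphi\,dV_K$. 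For the right-hand side, $P_{u^\perp}K$ is a zonoid in the $(n-1)$-dimensional space $u^\perp$ and $P_{u^\perp}L\in \mathcal{K}^{n-1}_e$, so the induction hypothesis yields
\[
\frac{\vol_{n-1}(P_{u^\perp}K)}{n-1}\log\frac{\vol_{n-1}(P_{u^\perp}L)}{\vol_{n-1}(P_{u^\perp}K)}\ \le\ \int_{\sn\cap u^\perp}\log\frac{h_{P_{u^\perp}L}}{h_{P_{u^\perp}K}}\,dV_{P_{u^\perp}K}.
\]
Integrating this against $\tfrac{2}{n}d\rho(u)$, using $h_K|_{u^\perp}=h_{P_{u^\perp}K}$ and $h_L|_{u^\perp}=h_{P_{u^\perp}L}$, and applying \eqref{zonoidmv} together with \eqref{sam-proj} to the test function $f(v)=\log(h_L(v)/h_K(v))h_K(v)$ (which is supported in the correct directions) collapses the double integral to $\int\log(h_L/h_K)\,dV_K$. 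Chaining the two estimates produces exactly the weak reverse-log-Minkowski inequality \eqref{eq-weaklm}.

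For the equality characterization, normalize so $V(L)=V(K)$; if equality holds in \eqref{lm-zonoid-p} then $L$ is a minimizer of the associated log-Minkowski variational problem, and the Lagrange variation approach of \cite{BLYZ13} forces $V_L=V_K$, so $\varphi=h_K/h_L$ is continuous and $\int \log\varphi\,dV_K=-\int\log(h_L/h_K)\,dV_K=0$. Hence equality is attained throughout the chain above. Tracing back, the equality clause in Lemma \ref{thm-weakrlm} produces, for $\rho$-a.e.\ $u$, a constant $c(u)$ such that $\varphi(v)=c(u)$ for $S_K$-a.e.\ $v\in\sn\setminus u^\perp$; by continuity of $\varphi$ this extends to all $v\in\mathrm{supp}\,S_K\setminus u^\perp$. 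Simultaneously, the induction hypothesis on projections forces $P_{u^\perp}K$ and $P_{u^\perp}L$ to have dilated direct summands in $u^\perp$ for $\rho$-a.e.\ $u$. Now invoke Lemma \ref{lem-relation} with $\Omega=\mathrm{supp}\,S_K$ and $\omega=\mathrm{supp}\,\rho$, both of which span $\mathbb{R}^n$ since $K$ is non-degenerate: the resulting decompositions $\mathbb{R}^n=V_1\oplus\cdots\oplus V_{i_0}=U_1\oplus\cdots\oplus U_{i_0}$ with $V_i\perp U_j$ for $i\ne j$ force $\varphi$ to be constant on each class $\Omega\cap V_i$; combined with the projection-level summand structure, this yields direct-sum decompositions $K=K_1\oplus\cdots\oplus K_{i_0}$ and $L=L_1\oplus\cdots\oplus L_{i_0}$ with each $L_i$ a dilate of $K_i$.

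The principal obstacle is the equality characterization: the projection reverse-log-Minkowski inequality only gives local constancy of $\varphi$ off each hyperplane $u^\perp$, and promoting this to a global direct-sum decomposition requires the combinatorial Lemma of relations together with the induction hypothesis on the equality case for the projected pairs. This gluing step is precisely where the argument surpasses van Handel's spectral-gap proof, which yielded only the inequality.
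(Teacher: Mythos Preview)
Your proposal is correct and follows essentially the same route as the paper: induction on dimension via the reverse-to-forward principle (Lemma~\ref{rtf-lm}), combining the projection reverse-log-Minkowski inequality (Lemma~\ref{thm-weakrlm}) with the lower-dimensional (LM) for the zonoid $P_{u^\perp}K$, and then integrating against $\rho$ using \eqref{zonoidmv}--\eqref{sam-proj}. The equality analysis is also the same in substance: from $V_L=V_K$ one gets $\varphi=h_K/h_L$ continuous, the equality clause of Lemma~\ref{thm-weakrlm} gives the constancy $\varphi(v)=c(u)$ off $u^\perp$, and the Lemma of relations (Lemma~\ref{lem-relation}) promotes this to a direct-sum decomposition.

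One small remark: your invocation of the inductive \emph{equality} case for the projected pairs $(P_{u^\perp}K,P_{u^\perp}L)$ is superfluous. The paper does not use it; once Lemma~\ref{lem-relation} yields $\Omega=\bigcup_i(\Omega\cap V_i)$ with $\varphi\equiv c_i$ on $\Omega\cap V_i$, the facts that $\mathrm{supp}\,S_K\subset\bigcup_i V_i$ and $dS_L=\varphi\,dS_K$ already force $K$ and $L$ to decompose as direct sums along the $V_i$ with $S_L|_{V_i}=c_i\,S_K|_{V_i}$, whence each summand of $L$ is a dilate of the corresponding summand of $K$ by the uniqueness in the Minkowski problem. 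Also, when you set $\omega=\mathrm{supp}\,\rho$ note that the equality in Lemma~\ref{thm-weakrlm} is a priori only for $\rho$-a.e.\ $u$; either work with a $\rho$-full measure subset (its span is still $\mathbb{R}^n$), or observe that continuity of $\varphi$ propagates the constancy to all $u\in\mathrm{supp}\,\rho$.
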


	\begin{proof}
		The proof is split into two parts.
		
		In the first part,  , we will prove the (LM) inequality, by induction.
			 Then,	we characterize the equality conditions in the second part.
		
		\vskip 4pt
	
\noindent{\it Part 1: The inequality \eqref{lm-zonoid-p}.}   By the reverse-to-forward principle 2 (Lemma \ref{rtf-lm}), it suffices to prove \eqref{eq-weaklm}, under the assumption that $S_K$ and $S_L$ are absolutely continuous with 	respect to each other. We will prove it by induction.
\begin{itemize}
	\item {\it Step 1:  $n=3$.}
	
		Denote	$\varphi =  {dS_L}/{dS_K}.$
		By \eqref{eq-rlm-proj-nd}, we have
		\be\label{eq-rlm-proj}\int_{S^2} \log\varphi(v) |u\cdot v| dS_{K}(v) \le2{\vol_2(P_{u^\perp} K)} \log\lh(\frac{\vol_2(P_{u^\perp} L)}{\vol_2(P_{u^\perp} K)}\rh).\ee	
		Since  for $v\in S^2\cap u^\perp$,
		\[h_{P_{u^\perp}  K}(v) = h_{K}(v), \quad  h_{P_{u^\perp}  L}(v) = h_{L}(v),\]
		applying  2-dimensional log-Minkowski inequality in $\tr^3\cap u^\perp$, and by \eqref{sam-proj}, we obtain
		\[
		\begin{aligned}
			& {\vol_2(P_{u^\perp} K)} \log\lh(\frac{\vol_2(P_{u^\perp} L)}{\vol_2(P_{u^\perp} K)}\rh)\\
			\le& \int_{S^2\cap u^\perp} \log \frac{h_{L}(v)}{h_{K}(v)} h_{K}(v)dS_{P_{u^\perp}K}(v)\\
			= & \int_{S^2 } \log \frac{h_{L}(v)}{h_{K}(v)} h_{K}(v)dS(  K, [-u,u], v).
		\end{aligned}
		\]
		Integrating this inequality with respect to the generating measure $\rho$,  by \eqref{zonoidmv} and \eqref{eq-rlm-proj},
		we get
		\be\label{eq-weaklm2d}\int_{S^2}\log \varphi  h_{K} dS_{K} \le 2 \int_{S^3}\log \frac{h_{L} }{h_{K} } h_{K}  dS_{K}.\ee
	 By  Lemma \ref{rtf-lm},  \eqref{lm-zonoid-p} follows immediately for all $L\in \tk^3_e$.
		
\item	{\it Step 2: From dimension $(n-1)$ to dimension $n$.}

		Assume that   (LM)   \eqref{lm-zonoid-p} is true in dimension $n-1$ with $n\ge 4$, in the case that the `base' convex body $K$ is a zonoid.

		Since $P_{u^\perp}K$ is   an $(n-1)$-dimensional zonoid in $\rn\cap u^\perp$, by induction, and by \eqref{sam-proj}, we have
			\[
		\begin{aligned}
			& {\vol_{n-1}(P_{u^\perp} K)} \log\lh(\frac{\vol_{n-1}(P_{u^\perp} M)}{\vol_{n-1}(P_{u^\perp} K)}\rh)\\
			\le& \int_{\sn\cap u^\perp} \log \frac{h_{M}(v)}{h_{K}(v)} h_{K}(v)dS_{P_{u^\perp}K}(v)\\
			= & \frac{n-1}2\int_{\sn } \log \frac{h_{M}(v)}{h_{K}(v)} h_{K}(v)dS(  K, [-u,u], v).
		\end{aligned}
		\]
It follows from this and \eqref{eq-rlm-proj-nd} that
		\be\label{weak-rlm-nd2} \int_{\sn } \log \varphi(v)|u\cdot v| dS_K(v)  \le (n-1)\int_{\sn } \log \frac{h_{L}(v)}{h_{K}(v)} h_{K}(v)dS(  K, [-u,u], v), \ee
		for each $u\in \sn$.  Integrating this inequality with respect to $\rho$, and using \eqref{zonoidmv}, we obtain
		\be\label{weak-rlm-nd2} \frac1{n-1} \int_{\sn } \log \varphi(v)  dV_K(v)  \le \int_{\sn } \log \frac{h_{L}(v)}{h_{K}(v)}  dV_K(v). \ee
	 By  Lemma \ref{rtf-lm},  \eqref{lm-zonoid-p} follows immediately for all $L\in \kn_e$.

\end{itemize}	
		
	\noindent{\it Part 2: Characterization of   equality conditions.}
	\vskip 4pt
	Since the sufficient part is clear, we only need to prove the necessary part.
	
	Suppose $L\in \kn_e$ achieves equality in \eqref{lm-zonoid-p}. Then, a dilation of $L$, say $aL$, will satisfy $V_{aL}=V_K$. Hence,
	equality also holds in 	\eqref{weak-rlm-nd2} for $K$ and $L$. As a result, there is a $\rho$-full measure set $\omega$, such that
	equality holds in \eqref{eq-rlm-proj-nd} for all $u\in\omega$.
	By Lemma \ref{thm-weakrlm}, there is an $S_K$-full measure set $\Omega$ (may depend  on $u$ presently), such that
	\[ \varphi(v) = \frac{dS_L}{dS_K}(v) = c(u), \quad \forall v\in \Omega  \setminus u^\perp,\]
	where $c(u)$ is a constant depending on $u\in \omega$. Since $\varphi= a^{-n}h_K/h_L$ is continuous, the set $\Omega$ can be chosen to be ${\rm supp} (S_K)$.
	
Now we recall the notations $\bowtie$, $\sim$ and $\simeq$ in Lemma \ref{lem-relation}.
By   Lemma \ref{lem-relation},
$\rn$ is decomposed into the direct sum of linear spans of the equivalence classes, $V_1,...,V_{i_0}$.
By the construction of $V_i$, there is
\[\Omega = \bigcup_{i=1}^{i_0} (V_i\cap \Omega).\]
Notice the facts that
\begin{itemize}
	\item If $v_1\simeq v_2$ in $\Omega$, then $\varphi(v_1)=\varphi(v_2)$;
		\item If $u_1\simeq u_2$ in $\omega$, then $c(u_1)=c(u_2)$.
\end{itemize}
Therefore, there are positvie numbers $c_1,...,c_{i_0}$, such that
\[ \varphi(v) = h_K(v)/h_L(v) = c_i, \quad \forall v\in \Omega\cap V_i.\]
Equivalently to say, $K$ and $L$ have dilated direct summands.
	\end{proof}

\noindent{\bf Acknowledgement.} The author thank Ramon van Handel for his very valuable discussions on earlier versions to improve the presentation.

\bibliographystyle{abbrv}

\end{document}